\newcommand{\rgr}{\mathbin{\gr}}
\newcommand{\PSc}{PS${}^\textup{c}$\xspace}
\newcommand{\PScslanted}{PS${}\,^\textup{c}$\xspace} 
\newcommand{\Conj}{\Con_{\textup{J}}}
\newcommand{\Le}{L^\textup{e}}
\newcommand{\conL}[1]{\textup{con}_{L}(#1)}
\newcommand{\conK}[1]{\textup{con}_K(#1)}
\newcommand{\gge}{\gg^\tup{e}}
\DeclareMathOperator{\Prime}{Prime}
\newtheorem{lemma}{Lemma}
\theoremstyle{definition}
\newtheorem{definition}{Definition}
\thanks{This research was supported by
NFSR of Hungary (OTKA), grant number K 115518}
\begin{document}
\title[The General Swing Lemma]{Congruence structure of planar semimodular lattices: \\The General Swing Lemma}

\author[G.\ Cz\'edli]{G\'abor Cz\'edli}
\email[G.\ Cz\'edli]{czedli@math.u-szeged.hu}
\urladdr{http://www.math.u-szeged.hu/~czedli/}
\address{University of Szeged\\ Bolyai Institute\\Szeged,
Aradi v\'ertan\'uk tere 1\\ Hungary 6720}
\author[G.\ Gr\"atzer]{George Gr\"atzer}
\email[G.\ Gr\"atzer]{gratzer@me.com} 
\urladdr[G.\ Gr\"atzer]{http://server.math.umanitoba.ca/homepages/gratzer/}
\author[H. Lakser]{Harry Lakser}
\email[H. Lakser]{hlakser@gmail.com} 
\address{Department of Mathematics\\University of Manitoba\\Winnipeg, MB R3T 2N2\\Canada}
\dedicatory{To the memory of E.\,T. Schmidt}
\subjclass[2010]{Primary: 06C10, Secondary: 06B05}
\keywords{Lattice, congruence, semimodular slim, planar, Swing Lemma}

\begin{abstract} 
The Swing Lemma (G.\ Gr\"atzer, 2015) 
describes how a congruence spreads 
from a prime interval to another 
in a slim (having no $\SM 3$ sublattice), 
planar, semimodular lattice.
We generalize the Swing Lemma to planar semimodular lattices.
\end{abstract}

\leftline{\hfill  \emph{Version of March 3, 2017}}

\maketitle

\section{Introduction}\label{S:Introduction}
Congruences in a distributive lattice $L$ 
were investigated in G. Gr\"atzer and E.\,T. Schmidt~\cite{GS58d}:
for $a \leq b, c \leq d \in L$,
the interval $[a,b]$ congruence spreads to the interval $[c,d]$
(that is, $\cng c = d (\con{a,b})$ or equivalently, $\con{a,b} \geq \con{c,d}$) 
if{}f we can get from $[a, b]$ to $[c, d]$
with one up-step (joining with an element) and one down-step (meeting with an element).
See G.~Gr\"atzer~\cite{gG66} for an interesting application of this idea.
 
In G. Gr\"atzer and E.\,T. Schmidt~\cite{GS96a} and
G. Gr\"atzer, H. Lakser, and E.\,T. Schmidt \cite{GLS98a},
we succeeded in representing a finite distributive lattice $D$
as the congruence lattice of a finite semimodular lattice $L$.
It was a great surprise that the lattice $L$ constructed was planar.

This result started the study of \emph{planar semimodular} lattices
in G.~Gr\"atzer and E. Knapp~\cite{GKn09}, \cite{GKn10};
see also G.~Cz\'edli and E.\,T.~Schmidt~\cite{CS12}, 
\cite{CS13}, G.~Cz\'edli~\cite{cG12}. 
For a 2013 review of this field, see G. Cz\'edli and G. Gr\"atzer \cite{CGa}.
Part VII in the book 
G.~Gr\"atzer~\cite{CFL2} provides a 2015 review of this field.
More articles on these topics are listed in the Bibliography.

It is a special focus of this new field to examine how
a prime interval $\fp$ congruence spreads to another prime interval $\fq$.

The first main result of this type, 
the Trajectory Coloring Theorem for Slim Rectangular Lattices,
is due to G. Cz\'edli~\cite{gC14}. It gives a quasiordering
of trajectories of slim rectangular lattices (a subclass of slim, planar, 
semimodular lattices) $L$ to represent the ordered set
of join-irreducible congruences of $L$.

The second main result of this type is the Swing Lemma for slim
(having no~$\SM 3$ sublattice), planar, 
semimodular lattices in G. Gr\"atzer~\cite{gG15}, see Section~\ref{S:General}. 

In this paper, we extend the Swing Lemma to planar semimodular lattices.

We use two different approaches.

The first, utilizes the first author's Eye Lemma \cite[Lemma 5.2]{cG12},
a contribution to the theory of congruences of planar lattices.
This lemma is applied to planar semimodular lattices
in Lemma~\ref{L:GSW1}.

The second approach is a contribution 
to the theory of congruences of finite lattices. 
Instead of the Eye Lemma, we prove the Tab Lemma
for finite lattices, which is based 
on the Prime-Projectivity Lemma of the second author
\cite[Lemma 4]{gG14c}. 
We state and prove Lemma~\ref{L:GSL}, 
a general version of the Swing Lemma
that does not require planarity.

\subsection{Outline}\label{S:Outline}
The basic concepts and results are introduced in 
Section~\ref{S:Preliminaries}.

In Section~\ref{S:General}, 
we state the Swing Lemma and the General Swing Lemma.
Section~\ref{S:short} presents
our short approach through planar lattices, utilizing the Eye Lemma.
The Tab Lemma for finite lattices is stated and proved
for general finite lattices in Section~\ref{S:tab}, 
which is then applied in Section~\ref{S:Swing} 
to prove the General Swing Lemma. 
Finally, Section~\ref{S:Discussion} discusses some related topic,
including a formulation of the General Swing Lemma
that makes no reference to planarity.

\section{Preliminaries}\label{S:Preliminaries}

We use the basic concepts and notation as in the books 
G. Gr\"atzer~\cite{LTF} and~\cite{CFL2}.

We call a planar semimodular lattice a \emph{PS lattice} and
a slim, planar, semimodular lattice an \emph{SPS lattice}. 
Note that a planar lattice is finite  by definition. 

For a lattice $L$, let $\Prime(L)$ denote the set of prime intervals of $L$.

A \emph{multi-diamond} is a lattice $M$ 
isomorphic to the $(n+2)$-element modular lattice~$\SM n$  
of length $2$ for some integer $n \geq 3$. 
A~\emph{covering multi-diamond} $M$ in a lattice $L$ is an \emph{interval} in $L$ isomorphic to some lattice $\SM n$
 for some integer $n \geq 3$.
An~element $m \in L$ is a \emph{tab}
of $L$ if it is doubly-irreducible 
and it is in a covering multi-diamond in $L$.
Note that $m^*$, the unique upper cover of $m$, is the unit
of the multi-diamond and $m_*$, 
the unique lower cover of $m$, is the zero. 

\begin{definition}
Let $\fp$, $\fq$ be prime intervals in a lattice $L$. 
We say that $\fp$ \emph{swings to}~$\fq$ if the following conditions are satisfied:
\begin{enumeratei}
\item $\fp \neq \fq$;
\item $1_{\fp} = 1_{\fq}$;
\item there is a lower cover $a \neq 0_{\fp}, 0_{\fq}$ of $1_{\fp}$ such that the three element set $\set{0_{\fp}, 0_{\fp}, a}$
generates a sublattice $S$ of $L$ isomorphic to
$\SfS 7$, where $0_{\fq}$ is the unique dual atom of $S$ that is
a proper join---see Figure~\ref{s7F}.
\end{enumeratei}
\end{definition}

\begin{figure}[htb]
\centerline{\includegraphics{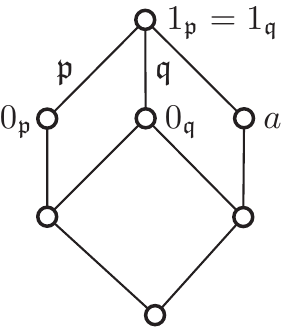}}
\caption{$\mathfrak{p}$ swings to $\mathfrak{q}$}\label{s7F}
\end{figure}

This swing concept---modified from G. Gr\"atzer~\cite{gG15}, 
where it is defined for SPS lattices---makes sense 
in an arbitrary lattice, 
and we get the following statement for general lattices, 
equally as trivial as it is for SPS lattices.

\begin{lemma}\label{collapseL}
Let $L$ be any lattice, let $\fp, \fq$ be prime intervals in $L$, and let $\fp$
swing to $\fq$. Then $\fp$ congruence spreads to $\fq$.
\end{lemma}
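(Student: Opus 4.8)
The plan is to carry out the whole argument inside the sublattice $S\cong\SfS 7$ supplied by the definition of swing, so that neither planarity nor semimodularity is ever used: the hypothesis ``$\fp$ swings to $\fq$'' already encodes all the lattice structure that is needed, and the conclusion then follows from a two-line congruence computation of the kind the introduction describes (one down-step followed by one up-step).

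First I would locate $S$ precisely. Put $t:=1_\fp=1_\fq$. Since $a\ne 0_\fp$ and both $a$ and $0_\fp$ are covered by $t$, we have $0_\fp\vee a=t$, so $t\in S$; and as the three generators $0_\fp,0_\fq,a$ of $S$ all lie below $t$, the element $t$ is the largest element of $S$. Because $\fp\ne\fq$ but $1_\fp=1_\fq$, the three generators are pairwise distinct, and each is covered by $t$ in $S$; hence $0_\fp,0_\fq,a$ are exactly the three dual atoms of $S\cong\SfS 7$, and $0_\fq$ is the one of them that is a proper join.

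Next, let $\Theta$ be any congruence of $L$ with $0_\fp\equiv t\pmod{\Theta}$; the goal is $0_\fq\equiv t\pmod{\Theta}$. I would push the collapse down by meeting with $a$: from $0_\fp\equiv t$ we get $0_\fp\wedge a\equiv t\wedge a=a\pmod{\Theta}$. Then push it back up by joining with $0_\fq$: $(0_\fp\wedge a)\vee 0_\fq\equiv a\vee 0_\fq\pmod{\Theta}$. On the right, $a$ and $0_\fq$ are distinct dual atoms of $S$, so $a\vee 0_\fq=t$. On the left, $0_\fp\wedge a$ is the zero of $S$ (see Figure~\ref{s7F}), so in particular $0_\fp\wedge a\le 0_\fq$ and the left side equals $0_\fq$. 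Thus $0_\fq\equiv t\pmod{\Theta}$, i.e.\ $\con{0_\fp,t}\ge\con{0_\fq,t}$, which says that $\fp$ congruence spreads to $\fq$.

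I do not expect any genuine obstacle; as the text remarks, the statement is as trivial here as in the SPS setting. The only spot deserving a second look is the inequality $0_\fp\wedge a\le 0_\fq$, which is exactly what makes the up-step land on $0_\fq$: it is visible in Figure~\ref{s7F}, where $0_\fp\wedge a$ is the bottom of $S$, and even without the picture one checks directly that $0_\fp$ and $a$ are the two join-irreducible dual atoms of $\SfS 7$ and their meet is the zero of $S$. Since the argument never invoked planarity or semimodularity of $L$, it in fact yields the lemma for an arbitrary lattice $L$, which is the only form we shall use.
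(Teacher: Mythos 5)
Your proof is correct: the paper states Lemma~\ref{collapseL} without proof (calling it trivial), and your argument --- identify $0_\fp,0_\fq,a$ as the three dual atoms of $S\cong\SfS 7$ with $0_\fp\mm a$ the bottom of $S$, then do one down-step ($\mm a$) and one up-step ($\jj 0_\fq$) to get $\cng{0_\fq}={1_\fq}(\con{\fp})$ --- is exactly the routine computation the authors intend. No gaps; the only details worth spelling out (that $1_\fp\in S$ is its top, that the three generators are its dual atoms, and that $0_\fp\mm a\leq 0_\fq$) are all handled correctly.
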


We shall need the following result later on.
\begin{lemma}\label{genSL}
Let $L$ be a finite semimodular lattice, let $[o,i]$ be an interval of length $2$ in $L$, and let $m$
be an atom of $[o,i]$ that is join-irreducible in $L$. Let $x \nin [o,i]$ be a lower cover of $i$ in $L$.
Then there is an atom $a \in [o,i]$ such that the elements $x, a, m$ generate a sublattice $S$ of
$L$ that is isomorphic to $\SfS 7$, where $a$ is the dual atom of $S$ that is a proper join.
 \end{lemma}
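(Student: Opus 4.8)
My plan is to argue entirely inside $L$, using only the covering calculus of finite semimodular lattices and letting the join-irreducibility of $m$ carry the essential point. First I would record the easy preliminaries. Since $m$ is an atom of the length-two interval $[o,i]$ we have $o\prec m\prec i$, and since $m$ is join-irreducible in $L$ its unique lower cover is $o$. From $x\notin[o,i]$ one reads off that $x\vee o=i$ (else $o\le x\le i$, so $x\in[o,i]$), hence $x\vee m=i$; and that $x\wedge m<m$ (else $m\le x$, again forcing $x\in[o,i]$), so $x\wedge m\le o$ and therefore $x\wedge m=x\wedge o$; the same reasoning gives $x\wedge o<o$. Put $v:=x\wedge o$.

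The heart of the argument is the construction of $a$. I would fix a maximal chain $v=z_0\prec z_1\prec\cdots\prec z_n=x$ in $[v,x]$. By semimodularity the sequence $o=z_0\vee o\preceq z_1\vee o\preceq\cdots\preceq z_n\vee o=i$ consists of coverings and equalities; it is a chain from $o$ up to $i$ inside the length-two lattice $[o,i]$, and $o\not\prec i$, so exactly two of its links are coverings. Let $j$ be least with $z_j\vee o>o$ and set $a:=z_j\vee o$. Then $a$ covers $o$, so $a$ is an atom of $[o,i]$ (hence $a\prec i$ in $L$ as well); moreover $j\ge 1$ because $z_0=v\le o$, so $z_{j-1}\le o$, while $z_j\not\le o$ and, crucially, $z_j\le x$. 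This $z_j$ is the witness that makes everything fit: $z_j\le x\wedge a$ yields $v<x\wedge a$, and $o\vee z_j=a$ yields $o\vee(x\wedge a)=a$.

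The join-irreducibility of $m$ re-enters to force $a\ne m$: were $a=m$, then $m=z_j\vee o$ would exhibit $m$ as the join of two elements strictly below it (here $z_j<m$, since $z_j\le x$ while $m\not\le x$, and $z_j\ne o$ since $z_j\not\le o$), which is impossible. With $a\ne m$ in hand the rest is routine: $m\wedge a=o$ (it lies in $[o,m]$, and $m\wedge a=m$ would give $m<a<i$, contradicting $m\prec i$), $x\wedge a\not\le o$ and $x\wedge a<a$ (else $a\le x$ puts $x\in[o,i]$), $x\vee a=i$ (as $a\ge o$), and a short computation shows that the seven-element set $S:=\{v,\,o,\,x\wedge a,\,x,\,m,\,a,\,i\}$ is closed under $\wedge$ and $\vee$ and consists of distinct elements. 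Thus $S=\langle x,a,m\rangle$; and reading off its order relations, $S$ has top $i$ covering the three coatoms $x,m,a$, bottom $v$, and satisfies $v\prec o\prec m$, $v\prec o\prec a$, $v\prec x\wedge a\prec x$, $v\prec x\wedge a\prec a$, and $a=o\vee(x\wedge a)$. This is precisely $\SfS 7$, in which $a$ is the unique coatom that is a proper join, since the coatoms $m$ and $x$ are join-irreducible in $S$ with lower covers $o$ and $x\wedge a$.

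I expect the only genuine obstacle to be the second step, the construction of $a$: recognizing that joining $o$ into a maximal chain beneath $x$ produces, in one stroke, an atom of $[o,i]$ together with an element $z_j\le x$ lying strictly between $v$ and $a$. Once that observation is made, the remainder is bookkeeping, its only subtle point being the second appeal to the join-irreducibility of $m$ to rule out $a=m$.
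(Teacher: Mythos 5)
Your proposal is correct and takes essentially the same route as the paper: your $z_j$ (which is necessarily $z_1$, an upper cover of $v=x\wedge o$ inside $[v,x]$) plays exactly the role of the paper's auxiliary element, semimodularity gives $o\prec a=z_j\vee o$, join-irreducibility of $m$ rules out $a=m$, and the verification that $\{v,\,o,\,x\wedge a,\,x,\,m,\,a,\,i\}$ is a sublattice isomorphic to $\SfS 7$ with $a$ the proper-join coatom is the same bookkeeping the paper carries out. No gaps.
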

\begin{proof}
Since $x \ngeq m$, $u = x \mm m < m$. Since $m$ is join-irreducible in $L$, it has exactly one lower cover $o$, hence, $u \leq o$. 
Since $x \ngeq o$, we conclude that $u < o$.

Since $x$ is a lower cover of $i$, it must be distinct from $u$ 
and so $u < x$. Then there is an
upper cover $v \leq x$ of $u$. Since $x \mm o = u$, 
it follows that $v \nleq o$---see Figure~\ref{genSF}.
Thus, by semimodularity, $a=v \jj o \leq i$ covers $o$. 
Since $a$ is proper join, it is distinct from $m$. 
Note that $a \in [o,i]$, so it is distinct from $x$. 
Since the length of $[o,i]$ is $2$, 
it follows that $a$ is a lower cover of $i$. 
Set $y = x \mm a \geq v$.

Then the subset $S = \set{u,y,o,x,a,m,i}$ is the sublattice generated by the set of lower covers
$\set{x,a,m}$ of $i$, and is isomorphic to $\SfS 7$ with the dual atom $a$ a proper join. \

Indeed, as noted above, the lower covers $x$, $a$, $m$ of $i$ are all distinct.

Furthermore, $S$ is a meet subsemilattice of $L$. 
By definition, $x \mm a = y$; 
also $a \mm m = o$, since they are distinct upper covers of $o$, and $x \mm m = u$, again by~definition.

That $S$ is also a join subsemilattice of $L$ is also easy:
\begin{equation*}
x \jj a = a \jj m = x \jj m = i
\end{equation*}
since
$x, a, m$ are distinct lower covers of $i$. 
Note also that by definition, $a=v \jj o$ and also
\[
   a = v \jj o \leq y \jj o \leq a,
\]
that is, that $y \jj o = a$.

The other two incomparable joins are then immediate:
\begin{equation*}
y \jj m = y \jj o \jj m = a \jj m = i,
\end{equation*}
and, similarly,
\begin{equation*}
o \jj x = o \jj y \jj x = a \jj x = i.
\end{equation*}
See Figure~\ref{genSF}, and note that, although $v$ appears in the figure, it is not an element of $S$.
\end{proof}

\begin{figure}[h]
\includegraphics[width=0.50\textwidth]{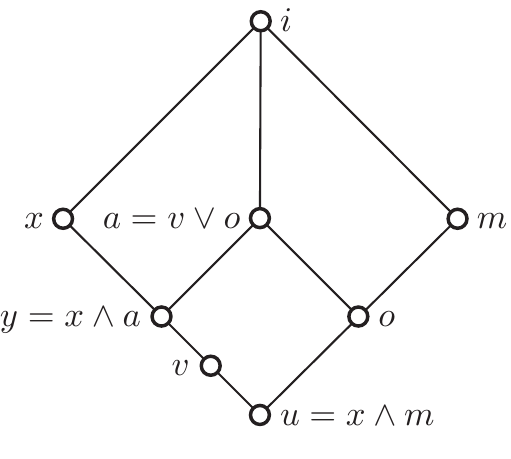}
\caption{Lemma~\ref{genSL}: Generating the $\SfS 7$.}\label{genSF}
\end{figure}

For prime intervals $\fp, \fq$ in an SPS lattice $L$, 
if $\fp$ congruence spreads to $\fq$, 
then the Swing Lemma gives a special kind of projectivity. 
We call such a projectivity an SPS projectivity.
Since the concept  ``$\fp$ swings to $\fq$'' now makes
sense in any lattice, we can define the concept of SPS projectivity for any
lattice.

\begin{definition}\label{SPSD}
Let $L$ be a lattice, and let $\fp, \fq$  be prime intervals in $L$. 
We say that $\fp$ is
SPS \emph{projective to} $\fq$ if either $\fp = \fq$ 
or there exists a prime interval $\fr$  
with $\fp$ up-perspective to  $\fr$, 
and a sequence of prime intervals and a sequence of binary relations
\begin{equation}\label{Eq:SPS}
   \fr = \fr_0 \rgr_1 \fr_1 \rgr_2 \fr_2 \cdots \fr_{n-1} \rgr_n \fr_n = \fq,
\end{equation}
where each relation $\gr_k$ is either a down perspectivity 
of $\fr_k$ to $\fr_{k+1}$ or a swing of $\fr_k$ to $\fr_{k+1}$.
\end{definition}

We call \eqref{Eq:SPS} an \emph{SPS sequence}.
By Lemma~\ref{collapseL}, the following holds in any lattice.

\begin{lemma}\label{collapse2L}
Let $L$ be any lattice, and let $\fp$ and $\fq$ be prime intervals in $L$. If $\fp$ is
SPS projective to $\fq$, then $\fp$ congruence spreads to $\fq$.
\end{lemma}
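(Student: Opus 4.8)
The plan is to reduce the statement to Lemma~\ref{collapseL}. The point is that the relation ``congruence spreads to'' is transitive --- since ``$\fp$ congruence spreads to $\fq$'' means $\con{\fp}\geq\con{\fq}$ and $\geq$ is transitive --- and that it is implied by each of the three kinds of elementary step, up-perspectivity, down-perspectivity, and swing, that can occur in an SPS sequence. Once these facts are in place, one only has to chain them along the sequence \eqref{Eq:SPS}.

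More precisely, I would first dispose of the trivial case $\fp=\fq$. Assuming $\fp\neq\fq$, fix, as in Definition~\ref{SPSD}, a prime interval $\fr$ with $\fp$ up-perspective to $\fr$ and an SPS sequence as in \eqref{Eq:SPS}. Then: (i) the initial up-perspectivity shows that $\fp$ congruence spreads to $\fr$, and every down-perspectivity $\fr_k\searrow\fr_{k+1}$ shows that $\fr_k$ congruence spreads to $\fr_{k+1}$ --- this is the classical fact that perspective prime intervals generate comparable (indeed equal) congruences; (ii) every swing of $\fr_k$ to $\fr_{k+1}$ shows that $\fr_k$ congruence spreads to $\fr_{k+1}$ by Lemma~\ref{collapseL}. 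Chaining these inequalities along \eqref{Eq:SPS},
\[
  \con{\fp}\geq\con{\fr}=\con{\fr_0}\geq\con{\fr_1}\geq\cdots\geq\con{\fr_n}=\con{\fq},
\]
where at the $k$th step we invoke the perspectivity case of~(i) or Lemma~\ref{collapseL} according to the type of $\rgr_k$. Hence $\con{\fp}\geq\con{\fq}$, that is, $\fp$ congruence spreads to $\fq$, as required.

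I do not expect a genuine obstacle here: all the substantive content is already contained in Lemma~\ref{collapseL} and in the elementary propagation of congruences along perspectivities, so what remains is only the transitivity of the spreading relation together with the bookkeeping of the sequence \eqref{Eq:SPS}. If one insists on a point that merits an explicit line, it is that the \emph{first} step is an \emph{up}-perspectivity rather than a down-perspectivity, so that $\con{\fp}\geq\con{\fr}$ must be read off directly from the definition of perspectivity; this, however, is immediate.
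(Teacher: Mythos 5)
Your proof is correct and matches the paper's approach: the paper simply notes that the lemma follows from Lemma~\ref{collapseL}, with the perspectivity steps and the transitivity of congruence spreading supplying exactly the routine chaining you spell out.
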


We will need the following results (G. Gr\"atzer~\cite{gG14b} 
and \cite[Lemma~3.3]{CFL2}):
\begin{lemma}\label{l33L}
Let $L$ be a finite lattice. Let $\bgg$ be an equivalence relation on $L$ all of whose
equivalence classes are intervals. Then $\bgg$ is a congruence relation if{f} the following
condition and its dual hold:
\begin{equation*}
\text{If $x$ is covered by $y \neq z \in L$ and $\cngd x = y (\bgg)$, 
then $\cngd z = y \jj z (\bgg)$.}
\end{equation*}
\end{lemma}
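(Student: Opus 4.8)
The plan is as follows. The forward implication is immediate: if $\bgg$ is a congruence and $x$ is covered by $y\neq z$ with $x\equiv y$ modulo $\bgg$, then joining $z$ to both sides and using $x\leq z$ gives $z=x\jj z\equiv y\jj z$; the dual condition holds by symmetry. So from now on assume the displayed condition and its dual, and let us prove that $\bgg$ is a congruence.

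First I would reduce the problem to the two ``covering'' substitution rules: \emph{(J)}~if $x$ is covered by $y$ and $x\equiv y$, then $x\jj c\equiv y\jj c$ for every $c\in L$, and, dually, \emph{(M)}~if $x$ covers $y$ and $x\equiv y$, then $x\mm c\equiv y\mm c$ for every $c$. Granting (J) and (M), the general substitution properties follow easily: for $a\leq b$ with $a\equiv b$, every member of a maximal chain $a=c_0\prec\dots\prec c_n=b$ lies in the $\bgg$-class of $a$, which is an interval containing $a$ and $b$; hence $c_i\equiv c_{i+1}$, and iterating (J) gives $a\jj c\equiv b\jj c$. Incomparable congruent pairs are reduced to comparable ones by passing through $a\mm b$, again using that each class is an interval. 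Thus $\bgg$ is a congruence.

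The heart of the matter is (J); then (M) follows by applying (J) to the dual lattice $L^\partial$, whose congruences are exactly those of $L$ and in which the displayed condition turns into its dual. To prove (J) I would induct on the length of the interval $[x,y\jj c]$. If $c\leq x$, there is nothing to prove. If $x\jj c$ covers $x$, then either $x\jj c=y$, so the two joins coincide, or the displayed condition applies to the two distinct covers $y$ and $x\jj c$ of $x$ and gives $x\jj c\equiv(x\jj c)\jj y=y\jj c$. Otherwise $x<x\jj c$ but $x\jj c$ does not cover $x$, so we can choose a cover $x_1$ of $x$ with $x<x_1<x\jj c$; then $x_1\jj c=x\jj c$, and we may assume $x_1\neq y$ (else again $x\jj c=y\jj c$), so the displayed condition yields $x_1\equiv w$ with $w=x_1\jj y$. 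A short computation shows that joining with $c$ carries $x_1$ to $x\jj c$ and $w$ to $y\jj c$. Now fix a maximal chain $x_1=v_0\prec\dots\prec v_s=w$; since $x_1\equiv w$ and classes are intervals, $v_j\equiv v_{j+1}$ for each $j$, and each interval $[v_j, v_{j+1}\jj c]$ is contained in $[x_1,y\jj c]$, which is strictly shorter than $[x,y\jj c]$. The inductive hypothesis then gives $v_j\jj c\equiv v_{j+1}\jj c$ for all $j$, and transitivity yields $x\jj c=x_1\jj c\equiv w\jj c=y\jj c$, as required.

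The step I expect to be the main obstacle is exactly this last case, where $x\jj c$ fails to cover $x$: joins do not respect the covering relation, so the naive induction on the length of $[x,x\jj c]$ does not close; one must instead induct on the length of $[x,y\jj c]$ and climb a maximal chain from $x_1$ up to $x_1\jj y$. It is precisely here that the hypothesis that every $\bgg$-class is an interval is indispensable, as it keeps that entire chain inside one class.
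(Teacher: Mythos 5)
Your proof is correct. One point of comparison has to be made first: the paper does not actually prove Lemma~\ref{l33L}; it is quoted from G.~Gr\"atzer \cite{gG14b} and \cite[Lemma 3.3]{CFL2} with only the remark that it ``is easy to prove,'' so there is no in-paper argument to measure yours against. What you supply is a complete, self-contained proof, and it holds up under checking. The forward direction is the trivial substitution observation (using $x\leq z$, so $x\vee z=z$). For the converse, the reduction of the full substitution property to the two covering rules (J) and (M), by climbing a maximal chain inside a congruence class (legitimate precisely because classes are intervals) and by routing incomparable congruent pairs through their meet, is standard and sound. The heart, your induction on the length of $[x,y\vee c]$ for (J), is handled correctly: if $c\leq x$ or $x\vee c\succ x$ the hypothesis (or triviality) applies directly; otherwise the chosen cover $x_1$ with $x\prec x_1<x\vee c$ satisfies $x_1\vee c=x\vee c$, the displayed condition applied to the two distinct covers $y$ and $x_1$ of $x$ gives $x_1\equiv w:=x_1\vee y$ with $w\vee c=y\vee c$, and since each $[v_j,v_{j+1}\vee c]\subseteq[x_1,y\vee c]$, which is strictly shorter than $[x,y\vee c]$, the inductive hypothesis applies along a maximal chain from $x_1$ to $w$ (again using that classes are intervals), and transitivity closes the argument; (M) indeed follows by dualizing, since the dual condition is assumed. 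The only interpretive point is that ``$x$ is covered by $y\neq z$'' must be read as \emph{both} $y$ and $z$ covering $x$ (the reading in Gr\"atzer's Technical Lemma), and you use exactly that reading consistently, both in the easy direction and when invoking the condition for the pairs $(y,x\vee c)$ and $(y,x_1)$. So your proposal is a correct elementary proof filling in a statement the paper only cites.
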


This lemma is easy to prove, but it is useful. 
The next lemma is far deeper
(G.~Gr\"atzer~\cite{gG14c} and \cite[Lemma~24.1]{CFL2}).
To state it, we need the concept of prime-perspectivity.
Recall that for (not necessarily prime) intervals $I = [0_I,1_I]$ and
$J=[0_J,1_J]$ we say that $I$ is \emph{down perspective} to $J$,
in formula $I \perspdn J$, if $0_I \jj 1_J =  1_I$ and $0_I \mm 1_J = 0_J$.

Let $L$ be a finite lattice 
and let $\fp$ and $\fq$ be prime intervals of $L$.  
\begin{figure}[b!]
\centerline{\includegraphics{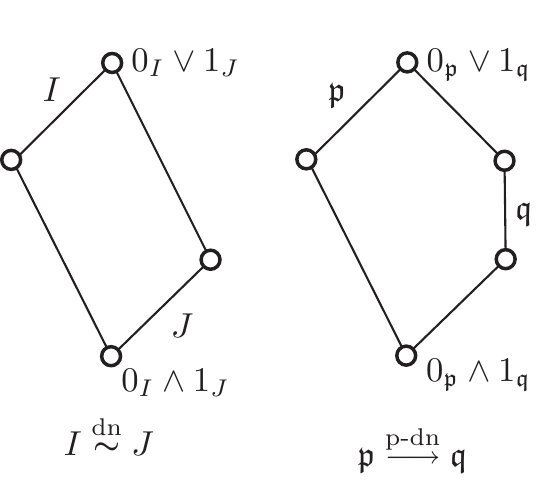}}
\caption{Introducing prime-perspectivity}\label{F:intro}
\end{figure} 
Then the binary relation $\fp$ \emph{prime-perspective down} to $\fq$, 
in formula, $\fp \pperspdn \fq$, is defined as
$\fp \perspdn [0_\fp \mm 1_\fq, 1_\fq]$
(that is, $0_\fp \jj 1_\fq = 1_\fp$)
and $0_\fp \mm 1_\fq \leq 0_\fq$; 
we define \emph{prime-perspective up}, $\fp \pperspup \fq$, 
dually---see Figure~\ref{F:intro}.
Let \emph{prime-perspective}, $\fp \ppersp \fq$, 
mean that $\fp \pperspup \fq$ or $\fp \pperspdn \fq$ and let
\emph{prime-projective}, $\fp \pproj \fq$, 
be the transitive extension of $\ppersp$. 

\begin{lemma}[Prime-Projectivity Lemma]\label{L:prime}
Let $L$ be a finite lattice and 
let $\fp$ and $\fq$ be distinct prime intervals in $L$.  
Then $\fq$ is collapsed by $\con{\fp}$ 
if{}f $\fp \pproj \fq$, that is, 
if{}f there exists a sequence of pairwise distinct prime intervals
$\fp = \fr_0, \fr_1, \dots, \fr_n = \fq$ satisfying
\begin{equation}\label{E:ppthm}
\fp = \fr_0 \ppersp \fr_1 \ppersp \dotsm \ppersp \fr_n = \fq.
\end{equation}
\end{lemma}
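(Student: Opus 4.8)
The plan is to prove the two implications separately; the forward one is routine, and the converse---which is the Prime-Projectivity Lemma proper---is where the work lies. For the forward implication, it is enough to note that a single prime-perspectivity step spreads a congruence. Suppose $\fr\ppersp\fr'$, say $\fr\pperspdn\fr'$, so that $0_{\fr}\jj 1_{\fr'}=1_{\fr}$ and $0_{\fr}\mm 1_{\fr'}\leq 0_{\fr'}$. Meeting $0_{\fr}\equiv 1_{\fr}\pmod{\con{\fr}}$ with $1_{\fr'}$ gives $0_{\fr}\mm 1_{\fr'}\equiv 1_{\fr'}\pmod{\con{\fr}}$, and since $0_{\fr}\mm 1_{\fr'}\leq 0_{\fr'}\leq 1_{\fr'}$ while congruence classes are intervals, we get $0_{\fr'}\equiv 1_{\fr'}\pmod{\con{\fr}}$, that is, $\con{\fr'}\leq\con{\fr}$; the case $\fr\pperspup\fr'$ is dual. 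Composing these inequalities along \eqref{E:ppthm} yields $\con{\fq}\leq\con{\fp}$, so $\fq$ is collapsed by $\con{\fp}$. In the other direction, an arbitrary $\ppersp$-sequence from $\fp$ to $\fq$ is trimmed to one with pairwise distinct members by repeatedly deleting the stretch between two equal terms (this keeps consecutive terms $\ppersp$-related), so insisting on distinctness in \eqref{E:ppthm} costs nothing.

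For the converse, assume $\fq$ is collapsed by $\con{\fp}$ and let $H$ consist of all $\fr\in\Prime(L)$ with $\fr=\fp$ or $\fp\pproj\fr$. By transitivity of $\pproj$, the set $H$ is closed under $\ppersp$, so it suffices to prove $\fq\in H$. Iterating the covering criterion of Lemma~\ref{l33L} and its dual, starting from the collapsed pair $0_{\fp}\prec 1_{\fp}$, recovers the classical description of $\con{\fp}$ in a finite lattice: $\fq$ is collapsed by $\con{\fp}$ if and only if there is a chain of intervals $\fp=I_0,I_1,\dots,I_n$ in which each $I_{k-1}$ is perspective (up or down) to $I_k$ and $\fq$ is a subinterval of $I_n$. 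I would then \emph{promote} this chain of interval-perspectivities to a prime-perspectivity sequence of prime intervals, reading it from $I_n$ backwards. The engine is the following claim together with its dual: \emph{if an interval $A$ is down-perspective to an interval $B$ and $\fs=[c,d]$ is a prime interval with $\fs\subseteq B$, then $A$ contains a prime interval $\fs'$ with $\fs'\pperspdn\fs$.} Granting the claim and its dual, a downward induction along $I_n,I_{n-1},\dots,I_0$ started at the prime interval $\fq\subseteq I_n$ produces prime intervals $\fr_n=\fq,\fr_{n-1},\dots,\fr_0$ with $\fr_k\subseteq I_k$ and $\fr_{k-1}\ppersp\fr_k$ for each $k$; since $I_0=\fp$ is prime, $\fr_0=\fp$, and reading the sequence forward gives $\fp=\fr_0\ppersp\cdots\ppersp\fr_n=\fq$, hence $\fp\pproj\fq$ and $\fq\in H$ (after trimming repetitions as above).

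It remains to prove the claim, which is the step I expect to be the main obstacle. Write $A=[0_{A},1_{A}]$ and $B=[0_{B},1_{B}]$, so that $0_{A}\jj 1_{B}=1_{A}$, $0_{A}\mm 1_{B}=0_{B}$, and $0_{B}\leq c\prec d\leq 1_{B}$. Assume first that $c\jj 0_{A}\neq d\jj 0_{A}$ and pick a maximal chain of $A$ running through $c\jj 0_{A}$ and $d\jj 0_{A}$. If $e$ lies on this chain with $e\geq c\jj 0_{A}$, then $c=c\mm d\leq e\mm d\leq d$, so $e\mm d\in\set{c,d}$ because $c\prec d$; this value is $c$ at $e=c\jj 0_{A}$ and $d$ at $e=d\jj 0_{A}$, so it jumps from $c$ to $d$ at a unique covering pair $[e,f]$ of the chain. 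There $e\mm d=c$ and $d\leq f$, whence $e\prec f$ forces $e\jj d=f$, and therefore $\fs'=[e,f]$ is a prime interval of $A$ with $\fs'\pperspdn\fs$; the dual claim follows by turning $L$ upside down. This leaves the degenerate case $c\jj 0_{A}=d\jj 0_{A}$, where the displayed value never jumps, and I expect to reduce it to the generic one by induction on the length of $A$: the base case, length one (so that $A$ is prime), is immediate since then $0_{A}\mm d\leq 0_{A}\mm 1_{B}=0_{B}\leq c$ and $0_{A}\jj d=1_{A}$ (as $d\nleq 0_{A}$), that is, $A\pperspdn\fs$ outright. Making this passage from perspectivity of intervals to prime-perspectivity of prime intervals work in an arbitrary, possibly non-modular, finite lattice---where the perspectivity maps $x\mapsto x\jj 0_{A}$ and $x\mapsto x\mm 1_{B}$ need not preserve covering---is what makes the lemma genuinely deep and is the step I expect to be hardest.
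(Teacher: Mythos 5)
Your forward direction is fine, and the trimming remark is harmless. The converse, however, rests on a false foundation. The ``classical description'' you invoke --- that $\fq$ is collapsed by $\con{\fp}$ if and only if there is a chain $\fp=I_0,I_1,\dots,I_n$ of intervals, each \emph{genuinely} perspective (up or down) to the next, with $\fq\subseteq I_n$ --- is not the classical description and is not true. Dilworth's description uses congruence-perspectivities (weak projectivity), in which one must pass to subintervals at \emph{intermediate} steps, not only at the end; postponing all shrinking to the last step is exactly what fails, and handling those intermediate shrinkings is the actual content of the Prime-Projectivity Lemma (this is why prime-perspectivity carries the one-sided condition $0_{\fp}\mm 1_{\fq}\leq 0_{\fq}$ rather than an equality). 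Here is an $8$-element counterexample. Take $L=\{o,a,u,b,c,v,w,i\}$ with covers $o\prec a$, $o\prec u$, $a\prec b$, $a\prec c$, $u\prec c$, $u\prec v$, $c\prec w$, $v\prec w$, $b\prec i$, $w\prec i$; so $c=a\jj u$, $w=c\jj v=a\jj v$, $b\jj c=b\jj w=b\jj u=i$, $b\mm c=b\mm w=a$, $c\mm v=u$. Let $\fp=[a,b]$ and $\fq=[u,v]$. Then $a\equiv b$ forces $c\equiv c\jj b=i$, hence $c\equiv w$, hence $u=c\mm v\equiv w\mm v=v$; indeed $\con{\fp}$ has classes $\{o\}$, $\{a,b\}$, $\{u,v\}$, $\{c,w,i\}$, so $\fq$ is collapsed. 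Every interval genuinely projective to $\fp$ is collapsed by $\con{\fp}$, so it must be one of $[a,b]$, $[u,v]$, $[c,w]$, $[c,i]$, $[w,i]$; checking all pairs, the only genuine perspectivities among these are $[c,i]\perspdn[a,b]$, $[w,i]\perspdn[a,b]$ and $[c,w]\perspdn[u,v]$. Hence the projectivity class of $\fp$ is $\{[a,b],[c,i],[w,i]\}$, and none of its members contains $\fq$. (The collapse is of course witnessed by prime-perspectivities: $[a,b]\pperspup[c,w]\pperspdn[u,v]$, with $b\jj c=i>w$ and $c\mm v=u$; but neither step is a genuine perspectivity, and no detour through whole-interval perspectivities exists.) So the chain of intervals your promotion argument starts from need not exist, and the converse collapses.

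Two further remarks. First, your ``promotion'' claim is only argued in the non-degenerate case $c\jj 0_A\neq d\jj 0_A$ (that part of the computation is correct); the degenerate case, which you yourself flag as the hard point, is left as a hope, so even granting a corrected description of $\con{\fp}$ the argument would be incomplete. Second, for comparison: this paper does not prove Lemma~\ref{L:prime} at all --- it quotes it from G.~Gr\"atzer \cite{gG14c} (see also \cite[Lemma 24.1]{CFL2}). The proof there runs in the opposite direction from yours: rather than decomposing a known description of $\con{\fp}$, one builds from the prime-projectivity relation an equivalence relation on $L$ and verifies directly, using the Technical Lemma (Lemma~\ref{l33L}), that it is a congruence collapsing $\fp$; since it is then above $\con{\fp}$, every prime interval collapsed by $\con{\fp}$ must be prime-projective from $\fp$. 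Reworking your attempt along those lines (or genuinely handling intermediate subintervals in the weak-projectivity chains) is what the lemma requires.
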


\section{The Swing Lemma and the General Swing Lemma}\label{S:General}

Now we state the main result in G. Gr\"atzer~\cite{gG15}. For a second proof, see G.~Cz\'edli~\cite{cGa} and for the shortest proof, G. Cz\'edli and G. Makay~\cite{CM}.

\begin{lemma}[Swing Lemma]\label{swingL}
Let $L$ be a slim, planar, semimodular lattice
and let $\fp, \fq$ be prime intervals in $L$. 
Then~$\fp$ congruence spreads to $\fq$ if{f} $\fp$ is SPS projective to $\fq$.
\end{lemma}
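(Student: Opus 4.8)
The plan is to prove the two directions of the Swing Lemma separately, the easy one first. The ``if'' direction---that SPS projectivity of $\fp$ to $\fq$ implies $\fp$ congruence spreads to $\fq$---is already handed to us by Lemma~\ref{collapse2L}, which holds in any lattice; so there is nothing to do there. All the work is in the ``only if'' direction: assuming $\con{\fp} \geq \con{\fq}$ (with $\fp \neq \fq$, the case $\fp = \fq$ being trivial), I must produce an SPS sequence from $\fp$ to $\fq$. The natural starting point is the Prime-Projectivity Lemma (Lemma~\ref{L:prime}), which gives a sequence of prime intervals $\fp = \fr_0 \ppersp \fr_1 \ppersp \dots \ppersp \fr_n = \fq$ in which consecutive intervals are related by prime-perspectivity (up or down). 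The task then reduces to showing that, in a slim planar semimodular lattice, each single prime-perspectivity step $\fr_k \ppersp \fr_{k+1}$ can be replaced by a (possibly longer) stretch of an SPS sequence: up-perspectivities, down-perspectivities, and swings.

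First I would dispose of the case of a prime-perspective-up step $\fr \pperspup \fs$. Here $0_\fr \mm 1_\fs = 0_\fr$ need not hold, but we have $1_\fr \jj 0_\fs = 1_\fs$ (the dual of the defining condition) together with $1_\fr \mm 0_\fs \ldots$; I should recall the exact definition and check that in a semimodular lattice a prime-perspective-up step is in fact an honest up-perspectivity, because semimodularity forces the ``interval'' $[1_\fr \mm 0_\fs, 1_\fr]$ pushed up to $[0_\fs, 1_\fs]$ to be a covering square. That handles the up steps cleanly. The genuine content is in the prime-perspective-down steps $\fr \pperspdn \fs$: here $0_\fr \jj 1_\fs = 1_\fr$ and $0_\fr \mm 1_\fs \leq 0_\fs$, but the interval $[0_\fr \mm 1_\fs, 1_\fr]$ can have length greater than one, so this is not a down-perspectivity of prime intervals. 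The key geometric step is to analyze this interval inside the SPS lattice $L$ using planarity: the trajectory through $\fr$ descends, and I want to track how it meets the ``fork'' or multi-diamond structure. This is precisely where I expect to invoke the first author's Eye Lemma, as advertised in the introduction, or alternatively Lemma~\ref{genSL}: given the length-two pieces that arise when the descending perspectivity crosses a covering $\SM 3$ (or the more general covering multi-diamond), Lemma~\ref{genSL} manufactures the $\SfS 7$ sublattice that certifies a swing.

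The main obstacle, and the place where planarity and slimness must be used in an essential way, is the induction that decomposes a long prime-perspective-down step into a sequence of down-perspectivities and swings of prime intervals. Concretely: given $\fr \pperspdn \fs$ with the interval $I = [0_\fr \mm 1_\fs, 1_\fr]$ of length $\ell > 1$, I would induct on $\ell$. Pick a lower cover $x$ of $1_\fr$ with $x \neq 0_\fr$ and $x$ lying ``toward'' $\fs$ in the planar diagram; then either $[x \mm 1_\fs, x]$ is shorter and I recurse after a down-perspectivity from $\fr$ onto a prime subinterval at the top of $I$, or the local configuration at $1_\fr$ is a covering multi-diamond and $0_\fr$ is the zero of it---in which case Lemma~\ref{genSL} (applied with $i = 1_\fr$, $m = 0_\fr$ if $0_\fr$ is join-irreducible, and $x$ the other lower cover) yields an $\SfS 7$ and hence a swing $\fr \rgr \ft$ to some prime interval $\ft$ with $1_\ft = 1_\fr$ that sits strictly below the problematic part, lowering $\ell$. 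The bookkeeping---ensuring the new prime intervals stay distinct (as required for an SPS sequence, or else observing distinctness is not actually needed in Definition~\ref{SPSD}), that the planar coordinates genuinely decrease so the induction terminates, and that every multi-diamond encountered in a \emph{slim} lattice is just an $\SM 3$ so the hypotheses of Lemma~\ref{genSL} about join-irreducibility of $m$ are met---is the delicate part and is exactly what the Eye Lemma is designed to streamline. Once every $\ppersp$ step has been rewritten, concatenating the pieces (and peeling off the initial up-perspectivity to match the shape of Definition~\ref{SPSD}) produces the desired SPS sequence, completing the ``only if'' direction.
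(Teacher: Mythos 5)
You should first be aware that this paper does not prove the Swing Lemma at all: it is quoted as the main result of G.~Gr\"atzer~\cite{gG15}, with alternative proofs in G.~Cz\'edli~\cite{cGa} and G.~Cz\'edli and G.~Makay~\cite{CM}, and the present paper only builds on it. So your proposal has to stand on its own as a proof of a genuinely deep theorem, and as it stands it does not. The parts that work are the routine ones: the ``if'' direction via Lemma~\ref{collapse2L}, the reduction of the ``only if'' direction to the Prime-Projectivity Lemma, and the observation that in a semimodular lattice a prime-perspective-up step between prime intervals is an honest up-perspectivity (indeed $1_\fr \mm 0_\fs = 0_\fr \prec 1_\fr$ forces $0_\fs \prec 0_\fs \jj 1_\fr = 1_\fs$). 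This much is in line with the known proofs.

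The heart of the matter---rewriting a single prime-perspective-down step $\fr \pperspdn \fs$, whose interval $[0_\fr \mm 1_\fs, 1_\fr]$ may be long, as a chain of cell-perspectivities and swings---is only gestured at, and the gestures contain concrete errors. The Eye Lemma is not the right tool: it concerns inserting an eye into a $4$-cell and is precisely what this paper uses to pass from slim to non-slim lattices; in a slim lattice there is no covering $\SM 3$ and hence no covering multi-diamond whatsoever, so your closing remark that ``every multi-diamond encountered in a slim lattice is just an $\SM 3$'' is vacuous, and swings in an SPS lattice must instead be produced from $\SfS 7$ configurations at elements with at least three lower covers. Your appeal to Lemma~\ref{genSL} assumes its hypotheses rather than verifying them: it needs $m$ to be a join-irreducible atom of a length-two interval $[o,i]$ and $x \nin [o,i]$ a lower cover of $i$, whereas you apply it with $m = 0_\fr$ ``if $0_\fr$ is join-irreducible,'' with no argument for the length-two configuration and no treatment of the case where join-irreducibility fails---which is exactly the hard case. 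Finally, the induction itself (``pick a lower cover of $1_\fr$ lying toward $\fs$; either the length drops after a down-perspectivity or there is a swing'') is asserted, not proved; establishing that such a choice exists, that each step really is a cell-perspectivity or a swing, and that the process terminates requires the trajectory and planar-geometry analysis that constitutes the published proofs in \cite{gG15} and \cite{CM}. In short, the outline reproduces the known reduction but omits, and partly misidentifies the tools for, the essential step.
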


Next we define the concept of \emph{switching}.
\begin{definition}\label{swD}
Let $L$ be any finite lattice, 
and let $\fp$, $\fq$ be distinct prime intervals in $L$. 
We say that $\fp$ \emph{switches to} $\fq$  
if both $\fp$ and $\fq$ lie in a common covering $\SM 3$ in $L$.
\end{definition}

Observe that switching is a symmetric concept, and, furthermore,
we have the immediate result:
\begin{lemma}\label{m3L}
Let $L$ be a lattice. 
If the prime interval $\fp$ switches to the prime interval $\fq$ in $L$,
then $\conL{\fp} = \conL{\fq}$.
\end{lemma}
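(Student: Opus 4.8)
The plan is to deduce the lemma from the elementary fact that perspective intervals are collapsed by exactly the same congruences, so no deep machinery is needed. First I would fix a covering $\SM 3$, say the interval $[o,i]$ of $L$, that contains both $\fp$ and $\fq$, and name its three atoms $a_1,a_2,a_3$. Since $[o,i]\cong\SM 3$ has length $2$, the elements $a_1,a_2,a_3$ are also the dual atoms of $[o,i]$, so $[o,i]$ contains exactly the six prime intervals $[o,a_j]$ and $[a_j,i]$ with $j\in\set{1,2,3}$, and each of $\fp$ and $\fq$ is one of these six. Hence it suffices to show that all six of them generate the same congruence of $L$.

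Next I would record the routine observation behind the argument: if $[a,b]\perspdn[c,d]$, that is, $a\jj d=b$ and $a\mm d=c$ (whence $c\le a$ and $d\le b$), then for every congruence $\bgg$ of $L$ we have $\cngd a=b(\bgg)$ if and only if $\cngd c=d(\bgg)$; indeed, meeting the first congruence with $d$ gives one direction, and joining the second with $a$ gives the other. Consequently $\conL{[a,b]}=\conL{[c,d]}$ whenever $[a,b]$ and $[c,d]$ are perspective. Applying this inside $[o,i]$: for distinct $j,k$ we have $a_j\jj a_k=i$ and $a_j\mm a_k=o$ by the structure of $\SM 3$, so $[a_k,i]\perspdn[o,a_j]$, and therefore $\conL{[a_k,i]}=\conL{[o,a_j]}$ for all $j\ne k$.

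Finally I would chain these equalities, for instance $\conL{[o,a_1]}=\conL{[a_2,i]}=\conL{[o,a_3]}=\conL{[a_1,i]}=\conL{[o,a_2]}=\conL{[a_3,i]}=\conL{[o,a_1]}$, which exhausts all six prime intervals of $[o,i]$ and shows that they all carry the same congruence of $L$; in particular $\conL{\fp}=\conL{\fq}$, as claimed. I do not anticipate any genuine obstacle: the only thing to check is that the join and the meet of two distinct atoms of $\SM 3$ equal $i$ and $o$ respectively, which is immediate. (One could instead invoke the simplicity of $\SM 3$ together with the fact that $[o,i]$ is an interval of $L$, forcing any congruence that collapses one prime interval of $[o,i]$ to collapse them all, but the perspectivity argument above is shorter and entirely self-contained.)
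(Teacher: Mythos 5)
Your argument is correct. The paper offers no proof of this lemma at all---it is stated as an immediate observation---and your perspectivity chain (each $[a_k,i]$ in the covering $\SM 3$ is down-perspective to $[o,a_j]$ for $j\neq k$, perspective prime intervals generate the same principal congruence, and chaining through all six prime intervals of $[o,i]$ gives $\conL{\fp}=\conL{\fq}$) is a complete and entirely standard way to supply the missing verification; the alternative you mention, combining the simplicity of $\SM 3$ with the fact that $[o,i]$ is an interval of $L$, is equally valid and is in fact the device the paper itself invokes in a similar situation in the proof of Lemma~\ref{sm0L}.
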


We now generalize the Swing Lemma to PS lattices.

\begin{lemma}[General Swing Lemma]\label{L:GSL}

Let $L$ be a planar semimodular lattice, and let $\fp, \fq$ be distinct prime
intervals in $L$. Then $\fq$ is collapsed by $\con{\fp}$ if{f} there exist sequences of
prime intervals in $L$,
$\fp_0 = \fp, \fp_1, \dots, \fp_n$ and
$\fq_0, \fq_1, \dots, \fq_n = \fq$
where $\fp_k$ is SPS projective to $\fq_k$
for $k = 0,\dots, n$, and, provided that $n > 0$, $\fq_k$ switches to $\fp_{k+1}$ for
$k=0,\dots, n-1$.
\end{lemma}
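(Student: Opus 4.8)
The plan is to deduce the General Swing Lemma from the Prime-Projectivity Lemma (Lemma~\ref{L:prime}) by showing that, in a planar semimodular lattice, each single prime-perspectivity step $\fr_i \ppersp \fr_{i+1}$ from \eqref{E:ppthm} can be ``factored'' into a chain of SPS-projectivity steps interspersed with switching steps of the required shape. Thus the key intermediate goal is: \emph{if $\fr \ppersp \fs$ in a PS lattice $L$, then there exist prime intervals $\fr = \ft_0, \ft_1, \dots, \ft_m = \fs$ with $\ft_j$ SPS projective to $\ft_{j+1}$, or $\ft_j$ switches to $\ft_{j+1}$, in an alternating fashion compatible with the statement}. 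Once this is established, concatenating the factorizations over all $i$ and then, if necessary, merging consecutive SPS-projective blocks (using that ``SPS projective to'' is transitive by its very definition, since \eqref{Eq:SPS} can be concatenated) yields the claimed normal form $\fp_0 = \fp, \dots, \fp_n$ and $\fq_0, \dots, \fq_n = \fq$. The converse direction is immediate from Lemma~\ref{collapse2L} and Lemma~\ref{m3L}: SPS projectivity forces congruence spreading, switching forces equality of congruences, so the displayed sequence certifies that $\fq$ is collapsed by $\con{\fp}$.

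For the main direction, I would analyze a prime-perspectivity step $\fr \pperspdn \fs$ (the up case being dual). By definition $0_\fr \jj 1_\fs = 1_\fr$ and $0_\fr \mm 1_\fs \le 0_\fs$. The situation splits according to whether $0_\fr \mm 1_\fs$ equals $0_\fs$ (genuine perspectivity, handled as in the SPS case) or is strictly below $0_\fs$. In a semimodular lattice, considering the interval $[0_\fr \mm 1_\fs,\, 1_\fr]$ and using semimodularity on the covering relation $0_\fr \prec 1_\fr$, one controls the length of $[0_\fr \mm 1_\fs, 1_\fr]$; I expect it to have length $2$, placing us in a configuration to which Lemma~\ref{genSL} applies, producing an $\SfS 7$ and hence a swing, possibly after a perspectivity. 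The genuinely new phenomenon in PS lattices (as opposed to SPS lattices) is the presence of covering multi-diamonds $\SM n$ with $n \ge 3$: when the relevant length-$2$ interval is such an $\SM n$, the passage from one prime interval to another inside it need not be a swing but \emph{is} a switch in the sense of Definition~\ref{swD}. This is precisely why switching must be admitted in the statement, and where the short planar approach via the Eye Lemma (Lemma~\ref{L:GSW1}) is implicitly doing analogous work.

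The main obstacle I anticipate is the bookkeeping that turns the local factorizations into the clean alternating global sequence: a priori one obtains an arbitrary finite word in the alphabet \{SPS-projective step, switch\}, and one must show it can be rewritten so that switches and SPS-projective blocks strictly alternate, with the word beginning and ending in SPS-projective blocks (allowing empty blocks only as forced by $n = 0$). Transitivity of SPS projectivity collapses consecutive SPS-projective blocks into one; two consecutive switches through the same or overlapping covering $\SM 3$'s need to be merged or re-routed, using that a covering $\SM 3$ has all its prime intervals mutually switching and that membership of $\fr$ and $\fs$ in a common covering $\SM 3$ is the defining condition. A subtle point is ensuring the prime intervals produced by Lemma~\ref{genSL} actually link up with the endpoints demanded by the multi-diamond geometry in a planar layout; here planarity and the structure theory of PS lattices (left/right boundaries, the fact that tabs sit inside covering multi-diamonds) should be invoked to guarantee that the $\SfS 7$'s and the $\SM 3$'s can be chosen coherently along a fixed planar diagram. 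I would carry this out by induction on the length $n$ of the prime-projective sequence furnished by Lemma~\ref{L:prime}, peeling off one $\ppersp$ step at a time and appending its factorization, then normalizing.
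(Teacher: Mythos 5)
There is a genuine gap, and it sits at both load-bearing points of your plan. First, your ``key intermediate goal'' --- that a single prime-perspectivity $\fr \pperspdn \fs$ in a PS lattice factors into SPS-projectivity and switching steps --- is essentially as hard as the General Swing Lemma itself, and the route you hint at does not work: the down-transpose $[0_\fr \mm 1_\fs, 1_\fs]$ of a prime interval in a semimodular lattice can have arbitrary length (semimodularity controls up-transposes of coverings, not down-transposes), so you are not in general in the length-$2$ configuration needed to invoke Lemma~\ref{genSL}; navigating a long drop is exactly the difficulty that the proof of the Swing Lemma (Lemma~\ref{swingL}) has to overcome, and your sketch offers no substitute for that argument. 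Second, your normalization step rests on the claim that SPS projectivity is ``transitive by its very definition, since \eqref{Eq:SPS} can be concatenated.'' It is not: Definition~\ref{SPSD} allows exactly one up-perspectivity, at the start, so concatenating two SPS sequences produces an interior up-perspectivity and the result is not an SPS sequence. In SPS lattices transitivity is a \emph{consequence} of the Swing Lemma (both relations coincide with congruence spreading), not of the definition, and in PS lattices it is not established at all; without it you cannot merge the consecutive SPS-projective blocks that arise at the junctions of your per-step factorizations, and no switch is available there to separate them, so the required alternating normal form is not reached.

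The paper avoids both problems by never re-entering the proof of the Swing Lemma: it reduces the PS case to the SPS case, either by induction on the number of eyes using the Eye Lemma (Lemma~\ref{L:eye}, giving Lemma~\ref{L:GSW1}), or by induction on the number of tabs, stripping a doubly-irreducible atom $m$ of a covering multi-diamond $[o,i]$, comparing $\conL{\fp}$ with its restriction to $K = L - \set{m}$ via the Tab Lemma (Lemma~\ref{mainL}), and using Lemmas~\ref{smL} and~\ref{genSL} to produce the one swing into $[o,i]$ and the one switch inside it that splice two applications of the induction hypothesis (and, at the base, the Swing Lemma itself) into the required sequences. If you want to salvage your approach, you would have to either prove the local factorization claim directly --- which amounts to redoing the Swing Lemma's analysis while tracking multi-diamonds --- or restructure the argument so that the SPS Swing Lemma is used as a black box, which is precisely what the tab-stripping induction accomplishes.
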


Since there is no switch in a slim lattice, the Swing Lemma is the General Swing Lemma for slim lattices.

\section{A short approach through planar lattices}\label{S:short}

Let $L$ be a planar lattice with an associated planar diagram.
\text{A~$4$-cell} $S = \set{0_S,v,w,1_S}$ in $L$ 
is a covering square with no internal element.
We \emph{insert an eye} $e$ into $S$ by adding an element $e$
to $L$, turning the covering square $S$ into an $\SM 3$ 
so that the new element $e$ becomes 
an internal element of $[0_S,1_S]$ in the lattice $L \uu \set{e}$, 
denoted by $\Le$. 

A quasiordered set $(H,\nu)$ is a nonempty set $H$ 
with a binary relation $\nu$ that is reflexive and transitive.
Following G. Cz\'edli~\cite{cG12}, a \emph{quasi-coloring} of  a lattice $M$ of finite length is a surjective map $\gg$  from  $\Prime(M)$ onto a 
quasiordered set $(H,\nu)$ such that for $\fa,\fb\in\Prime(M)$, 
the following two conditions hold:
\begin{enumerate}
\item[(C1)] if $(\gg\fa, \gg\fb)\in\nu$, then  
$\fb$ congruence spreads to $\fa$;
\item[(C2)] if $\fb$ congruence spreads to $\fa$, then $(\gg\fa,\gg\fb)\in\nu$.
\end{enumerate}

For $\rho\ci Q^2$, there is a least quasiorder $\ol \gr$
containing $\rho$ on $Q$;
it is the reflexive and transitive extension of $\rho$ on~$Q$. 
Let $(Q,\nu)=(\Conj L, \ci)$, and let
$\gg\colon \Prime(L)\to (Q,\nu)$ be the  
\emph{natural quasi-coloring} of $L$ 
defined by the rule $\gg\fa=\conL{\fa}$;  
see \cite[above Lemma 2.1]{cG12}.  
Let $\bga=\gg[v,1_S]=\conL {v,1_S}$ and  
$\bgb=\gg[w,1_S]=\conL{w,1_S}$.  
Define $\tau$ as 
the least quasiorder 
containing  $\nu\uu\set{(\bga,\bgb),(\bgb,\bga)}$, 
and extend $\gg$ to $\gge\colon\Prime(\Le)\to (Q,\tau)$ 
by $\gg [0_S, e] = \gg [e, 1_S] = \bga$.
(G. Cz\'edli \cite{cG12} uses the notation 
$L^\circledcirc$ and $\gg^\circledcirc$.)

Now we can state the crucial technical lemma G. Cz\'edli \cite[Lemma 5.2]{cG12}.

\begin{lemma}[Eye Lemma]\label{L:eye}
The map $\gge$ is a quasi-coloring of $L^\tup{e}$.
\end{lemma}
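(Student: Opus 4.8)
The plan is to verify directly that $\gge$ satisfies the two defining conditions (C1) and (C2) of a quasi-coloring on $\Le$, using the fact that $\gg$ is already a quasi-coloring on $L$ and that $\Le$ differs from $L$ only in the insertion of the single eye $e$ into the $4$-cell $S$. The five new prime intervals to worry about are $[0_S,e]$ and $[e,1_S]$ (colored $\bga$), and we must compare the behavior of congruences of $\Le$ on prime intervals against $\tau$, the quasiorder obtained from $\nu$ by forcing $\bga$ and $\bgb$ to be equivalent. First I would record the elementary observation that the congruence lattice of $\Le$ is naturally related to that of $L$: every congruence of $L$ extends to $\Le$, and conversely every congruence of $\Le$ restricts to $L$; moreover, inside the covering $\SM 3$ with atoms $v,e,w$ and top $1_S$, a congruence collapsing any one of the three prime intervals $[v,1_S]$, $[e,1_S]$, $[w,1_S]$ collapses all three (this is Lemma~\ref{m3L}, or just the trivial fact about $\SM 3$). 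That already explains why $\bga$ and $\bgb$ must be identified in $\tau$.

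Next I would check (C1): if $(\gge\fa,\gge\fb)\in\tau$, then $\fb$ congruence spreads to $\fa$ in $\Le$. Since $\tau$ is the transitive, reflexive closure of $\nu\cup\{(\bga,\bgb),(\bgb,\bga)\}$, it suffices to handle one ``generating step'' at a time: either a step coming from $\nu$ — where I invoke (C1) for $\gg$ on $L$ and then argue that congruence-spreading transfers from $L$ to $\Le$ on the relevant prime intervals (the prime intervals of $L$ sit inside $\Le$, and a prime interval of $\Le$ of color $\bga$ is projective to $[v,1_S]$ or to $[w,1_S]$ through the $\SM 3$) — or a step $\bga\leftrightarrow\bgb$, which is exactly the statement that inside the covering $\SM 3$ every one of $[v,1_S],[e,1_S],[w,1_S]$ congruence spreads to every other. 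Then (C2): if $\fb$ congruence spreads to $\fa$ in $\Le$, I must show $(\gge\fa,\gge\fb)\in\tau$. Here the idea is to push the spreading relation down to $L$. For prime intervals both lying in $L$, one argues that $\con_{\Le}(\fb)$ restricted to $L$ is $\con_L(\fb)$ possibly enlarged by at most the effect of collapsing $S$; using the Prime-Projectivity Lemma (Lemma~\ref{L:prime}) one traces a prime-perspectivity sequence in $\Le$ from $\fb$ to $\fa$ and checks that each prime-perspective step either already lives in $L$ (so is accounted for by $\nu\subseteq\tau$ via (C2) for $\gg$) or passes through one of the five special prime intervals of color $\bga$ (so contributes only a $\bga$–$\bgb$ identification, hence stays within $\tau$). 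Prime intervals of $\Le$ that are not in $L$ are exactly $[0_S,e]$ and $[e,1_S]$, both of color $\bga$, and these are prime-perspective to $[0_S,w]$ and $[w,1_S]$ (and to $[0_S,v]$, $[v,1_S]$), which are in $L$ and have colors $\bgb$ and $\bga$; so they can be exchanged for prime intervals of $L$ at the cost of, again, only a $\bga$–$\bgb$ move.

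The main obstacle I anticipate is the careful bookkeeping in the ``hard'' direction (C2): one must be sure that when $\con_{\Le}(\fb)$ collapses $\fa$, the extra collapsing power that $\Le$ has over $L$ — coming solely from the presence of $e$ and the $\SM 3$ structure on $S$ — introduces \emph{nothing beyond} the single extra relation $\bga\leftrightarrow\bgb$. Concretely, one needs a clean statement that $\con_{\Le}(\fb)$ restricted to $L$ equals either $\con_L(\fb)$, or the join of $\con_L(\fb)$ with $\con_L(v,1_S)$ (equivalently $\con_L(w,1_S)$), and that the second case occurs precisely when $\fb$ is (in $L$) congruence-related to $[v,1_S]$; proving this may require invoking Lemma~\ref{l33L} to see that the naturally-defined equivalence on $\Le$ obtained by ``adding $e$'' to a congruence of $L$ is itself a congruence, and then identifying it with $\con_{\Le}(\fb)$. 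Once that reduction lemma is in hand, both (C1) and (C2) follow by the step-by-step argument sketched above, and surjectivity of $\gge$ onto $(Q,\tau)$ is immediate since $\gg$ is already surjective onto $Q$ and the target set $Q$ is unchanged. I would therefore organize the proof as: (1) the congruence-transfer/reduction lemma between $L$ and $\Le$; (2) verification of (C1), one generating step at a time; (3) verification of (C2), using Lemma~\ref{L:prime} to reduce to single prime-perspective steps and then Lemma~\ref{l33L} (via step (1)) to control the new collapses; (4) surjectivity, which is trivial.
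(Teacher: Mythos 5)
You are attempting to prove a statement that this paper does not prove at all: the Eye Lemma is imported from G.~Cz\'edli \cite[Lemma 5.2]{cG12}, and that proof leans essentially on the geometry of planar diagrams. Your outline, by contrast, uses only congruence bookkeeping (restriction/extension, Lemma~\ref{l33L}, the Prime-Projectivity Lemma) and never invokes planarity or the hypothesis that $S$ is a $4$-cell, and that is exactly where the gap lies. The whole difficulty of (C2) is concentrated in one claim, which you assert but do not prove, namely your ``clean statement'' and the assertion that passing through the eye ``contributes only a $\bga$--$\bgb$ identification'': whenever $\conLe{\fb}$ collapses the new $\SM 3$ --- equivalently, whenever a prime-perspectivity sequence in $\Le$ enters $[0_S,e]$ or $[e,1_S]$ from a prime interval $\fr$ of $L$ --- one must already have $\bga\leq\conL{\fr}$ or $\bgb\leq\conL{\fr}$ \emph{in $L$}; only then is the step covered by $\tau$. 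This is false without planarity, so no argument of the kind you describe can establish it. Concretely, let $L=\SC{2}^3$ with atoms $a,b,c$, take the covering square $0_S=a$, $v=a\jj c$, $w=a\jj b$, $1_S=1$, and insert a doubly irreducible $e$ with $a\prec e\prec 1$. For $\fb=[b\jj c,\,1]$ we have $(b\jj c)\mm e=0$, so $\fb\pperspdn[0,e]$ in $\Le$ and hence $\conLe{\fb}$ collapses the whole $\SM 3$ $[a,1]$, in particular $\fa=[v,1]$; yet $\conL{\fb}=\con{0,a}$, $\bga=\con{0,b}$, $\bgb=\con{0,c}$ are three pairwise incomparable atoms of $\Con L$, so $(\gge\fa,\gge\fb)\nin\tau$ and (C2) fails. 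Of course $\SC{2}^3$ is not planar, so this does not contradict the Eye Lemma --- but every step of your outline applies verbatim to this configuration, which shows the outline is missing the essential ingredient.

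What the missing ingredient is: by the Prime-Projectivity Lemma (or the Tab Lemma together with Lemma~\ref{sm0L} of this paper, applied to $\Le$ with tab $e$), the new collapses reduce to prime intervals $[y,1_S]$ with $y$ a lower cover of $1_S$ in $L$, or dually $[0_S,z]$; one must then prove that $\conL{y,1_S}\geq\bga$ or $\conL{y,1_S}\geq\bgb$ for \emph{every} such $y$ (and dually). This is where planarity and the $4$-cell hypothesis are indispensable: since $S$ is a $4$-cell, $v$ and $w$ are consecutive lower covers of $1_S$, so any further lower cover $y$ lies to one side, and the Kelly--Rival betweenness property of planar diagrams gives $y\mm w\leq v$ (hence $y\mm w\leq 0_S$) or symmetrically $y\mm v\leq 0_S$; collapsing $[y,1_S]$ then collapses $[0_S,w]$ or $[0_S,v]$, i.e., forces $\bga$ or $\bgb$. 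Without this geometric step (or Cz\'edli's trajectory machinery, which plays the same role in \cite{cG12}), the bookkeeping cannot close. Two smaller inaccuracies point the same way: ``every congruence of $L$ extends to $\Le$'' is false as stated (a congruence of $L$ collapsing $[v,1_S]$ but not $[w,1_S]$ is the restriction of no congruence of $\Le$), and in the enlarged case the restriction of $\conLe{\fb}$ to $L$ is $\conL{\fb}\jj\bga\jj\bgb$, not the join with just one of them --- the passage from this to a $\tau$-step needs join-primeness of $\con{\fa}$ in the distributive lattice $\Con L$, which you use implicitly but should state.
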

 
Let $L$ be a planar semimodular lattice and let $\fp, \fq$ be distinct prime
intervals in $L$. 
Let us call the sequence provided for $\fp, \fq$ in the General Swing Lemma a \emph{PS sequence}. 
If we assume, in addition, 
that every perspectivity is a perspectivity in a $4$-cell,
we call the sequence a \emph{PS cell sequence} (\PSc sequence, for short).
%
More exactly, in a planar semimodular lattice $L$, 
for the prime intervals $\fp$ and $\fq$ in $L$,
a \PSc sequence from $\fp$ to $\fq$ is a sequence 
$\fr_0=\fp$, $\fr_1$, \dots, $\fr_{n-1}$, $\fr_n=\fq$ 
of prime intervals of $L$ such that, 
for each $1 \leq i \leq n$, one of the following three conditions holds.
\begin{enumeratei}
\item\label{slsa} $\fp_{i-1}$ and $\fp_i$ are \emph{cell perspective},
that is, they are opposite sides of the same $4$-cell;
\item\label{slsb} ${\fp_{i-1}}$ \emph{swings} to ${\fp_{i}}$;
\item\label{slsc} ${\fp_{i-1}}$ \emph{switches} to ${\fp_{i}}$.
\end{enumeratei}

The following statement is well known.

\begin{lemma}\label{L:inM}
Let $L$ be a planar lattice and 
let $S = \set{0_S,v,w,1_S}$ be a~$4$-cell in~$L$. 
If we insert an eye $e$ into $S$, 
then any two prime intervals of $S \uu \set{e}=\SM 3$ 
are connected with 
a \PScslanted{} sequence.
\end{lemma}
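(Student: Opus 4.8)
The plan is to reduce the whole statement to a single application of the notion of switching. The key preliminary observation I would record is that, because $S$ was a $4$-cell of $L$, the interval $[0_S,1_S]$ computed in $L$ contains no elements besides $0_S,v,w,1_S$; inserting the eye $e$ enlarges this interval by exactly the single new element $e$, so that the interval $[0_S,1_S]$ of $\Le$ equals $S\uu\set{e}=\set{0_S,v,w,e,1_S}$, which is isomorphic to $\SM 3$. In particular, $S\uu\set e$ is a covering $\SM 3$ in $\Le$.

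Given this, I would argue as follows. Let $\fp$ and $\fq$ be prime intervals of $S\uu\set e$. If $\fp=\fq$, then the trivial one-term sequence $\fr_0=\fp$ is already a \PSc sequence from $\fp$ to $\fq$. If $\fp\neq\fq$, then $\fp$ and $\fq$ both lie in the covering $\SM 3$ exhibited above, so, by Definition~\ref{swD}, $\fp$ switches to $\fq$; hence $\fr_0=\fp$, $\fr_1=\fq$ is a \PSc sequence from $\fp$ to~$\fq$. That settles the lemma.

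For a more ``geometric'' witness (closer to the way the lemma is used in Section~\ref{S:short}), I would instead point out that the only $4$-cells of $\Le$ lying inside $[0_S,1_S]$ are $\set{0_S,v,e,1_S}$ and $\set{0_S,e,w,1_S}$; walking through these two $4$-cells by cell perspectivities joins the prime intervals of $S\uu\set e$ into two blocks, and a single switch inside the covering $\SM 3$ bridges the two blocks, again producing a \PSc sequence between any two prescribed prime intervals. There is no genuine obstacle in this proof: the only point that calls for any care is the preliminary observation that $[0_S,1_S]_{\Le}$ is precisely the five-element $\SM 3$, and that is immediate from the definition of eye insertion together with the hypothesis that $S$ is a $4$-cell.
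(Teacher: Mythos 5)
Your overall plan---reduce the lemma to a single switch inside $S\uu\set{e}$---is the natural one (the paper itself offers no proof, recording the statement as well known), so the only issue is the justification, and there it breaks down: the ``key preliminary observation'' that a $4$-cell exhausts its interval is false. Being a $4$-cell only forbids elements inside the quadrilateral region of the planar diagram; the interval $[0_S,1_S]$ of $L$ may contain further atoms outside that region. Take $L=\SM 3$ with atoms $a,b,c$ from left to right: $S=\set{0,a,b,1}$ is a $4$-cell, yet $[0,1]_L=\SM 3$, and after inserting an eye the interval $[0_S,1_S]$ of $\Le$ is $\SM 4$, not the five-element $\SM 3$. This situation is not marginal here: in the induction proving Lemma~\ref{L:GSW1} eyes are inserted repeatedly, so the $4$-cell receiving the new eye often lies inside a larger multi-diamond. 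Thus the one point you single out as ``the only point that calls for any care'' is exactly the point that fails.

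The conclusion you actually need is weaker and true: $S\uu\set{e}$ is always an $\SM 3$ sublattice of $\Le$ all of whose edges are covering pairs of $\Le$ (the coverings $0_S\prec v,w\prec 1_S$ persist because the only new element $e$ is incomparable to $v$ and $w$, and $0_S\prec e\prec 1_S$ by the construction of eye insertion). Reading ``covering $\SM 3$'' in Definition~\ref{swD} as such a cover-preserving $\SM 3$ sublattice---the reading forced by the paper's own later uses, e.g.\ switching between prime intervals of a covering multi-diamond $[o,i]\cong\SM n$ with $n>3$ in the proof of Lemma~\ref{L:GSL}---your one-switch argument goes through verbatim once the false interval claim is deleted. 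Note that under the stricter reading (an interval isomorphic to $\SM 3$) the lemma would actually fail for $\Le\cong\SM 4$: there is no $\SM 3$ interval, there are no swings, and cell perspectivities split the six prime intervals of $S\uu\set{e}$ into the two blocks you describe, so no \PSc sequence would connect them; so the false observation cannot simply be waved away under that reading. The same caveat applies to your ``geometric'' variant, whose bridging step is again the switch, and whose assertion about ``the only $4$-cells of $\Le$ lying inside $[0_S,1_S]$'' is false for the same reason (though there it is harmless).
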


We are now ready to prove the General Swing Lemma in the following equivalent form.

\begin{lemma}[General Swing Lemma$'$]\label{L:GSW1}
Let $L$ be a planar semimodular lattice and let $\fp, \fq$ be distinct prime
intervals in $L$. Then $\fp$ congruence spreads to~$\fq$ if{f} 
there exists 
a \PScslanted sequence from $\fp$ to $\fq$.
\end{lemma}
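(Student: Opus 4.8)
The plan is to prove the two directions separately, exploiting the machinery already assembled. The "if" direction is the easy one: if there is a \PSc sequence from $\fp$ to $\fq$, then at each step $\fr_{i-1}$ and $\fr_i$ either are opposite sides of a $4$-cell (hence perspective, so mutually congruence-equivalent), or $\fr_{i-1}$ swings to $\fr_i$ (apply Lemma~\ref{collapseL}), or $\fr_{i-1}$ switches to $\fr_i$ (apply Lemma~\ref{m3L}). Composing these along the sequence shows $\con{\fp} \geq \con{\fq}$, i.e.\ $\fp$ congruence spreads to $\fq$. Note that cell perspectivity and switching are in fact symmetric, while a swing only gives one inequality; but one inequality is all we need here.

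For the "only if" direction, the idea is to pass to the lattice $\Le$ obtained by inserting an eye into every $4$-cell of $L$ that is not already an $\SM 3$, or rather to argue cell-by-cell. More precisely: suppose $\fp$ congruence spreads to $\fq$ in $L$. By the Prime-Projectivity Lemma (Lemma~\ref{L:prime}), there is a sequence of pairwise distinct prime intervals $\fp = \fr_0 \ppersp \fr_1 \ppersp \dots \ppersp \fr_m = \fq$. The task is to replace each single prime-perspectivity step $\fr_{i-1} \ppersp \fr_i$ by a \PSc sequence. This is where the Eye Lemma (Lemma~\ref{L:eye}) enters: inserting an eye into a $4$-cell does not change the congruence structure on the prime intervals of $L$ in an essential way — the natural quasi-coloring $\gg$ extends to a quasi-coloring $\gge$ of $\Le$ with the two new prime intervals receiving the old color $\bga = \bgb$. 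Consequently a prime-perspectivity in $L$ can be analyzed inside $\Le$, where the relevant intervals are unions of $4$-cells and of inserted $\SM 3$'s, and Lemma~\ref{L:inM} supplies a \PSc sequence connecting any two prime intervals of such an inserted $\SM 3$.

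Concretely, I would argue that a single prime-perspectivity $\fr_{i-1} \pperspdn \fr_i$ in a planar semimodular lattice can be factored as a sequence of \emph{cell} perspectivities together with swings: this is essentially the content of the (slim) Swing Lemma applied locally, combined with the observation that in a PS lattice every $4$-cell is either a genuine covering square or a covering $\SM 3$, and in the latter case Definition~\ref{swD} (switching) handles the passage between its sides. So the decomposition of a prime-perspectivity refines into steps of types \eqref{slsa}, \eqref{slsb}, \eqref{slsc}, which is exactly a \PSc sequence. Chaining these over $i = 1, \dots, m$ and discarding the eyes (each eye-involving sub-sequence is replaced, via Lemma~\ref{L:inM}, by a \PSc sequence staying in $L$ through the corresponding covering $\SM 3$) yields the required \PSc sequence from $\fp$ to $\fq$.

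The main obstacle I anticipate is the careful bookkeeping in the "only if" direction: showing that each prime-perspectivity in $L$ — which can span a large interval, not just a single cell — decomposes into cell-perspectivities, swings, and switches, and that inserting and then removing eyes can be done coherently without introducing prime intervals outside $L$. The Eye Lemma guarantees that the congruence relations are preserved under eye-insertion, so the existence of \emph{some} connecting sequence in $\Le$ is automatic; the real work is to localize it to $4$-cells (using planarity and semimodularity to control the structure of intervals) and to convert the eye-side steps back into switches in the original covering $\SM 3$'s of $L$. Once that localization is in hand, assembling the global \PSc sequence is routine concatenation.
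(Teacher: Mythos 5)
Your ``if'' direction is fine and is exactly the paper's (Lemma~\ref{collapse2L} plus Lemma~\ref{m3L}). The ``only if'' direction, however, has a genuine gap at its center. You reduce the problem, via the Prime-Projectivity Lemma (Lemma~\ref{L:prime}), to showing that a single prime-perspectivity $\fr_{i-1}\ppersp\fr_i$ in a planar semimodular lattice factors into cell perspectivities, swings, and switches, and you justify this by saying it is ``essentially the content of the (slim) Swing Lemma applied locally.'' That justification does not stand: Lemma~\ref{swingL} is stated and proved only for slim lattices, and there is no ``local'' version of it applicable to a prime-perspectivity sitting inside a non-slim PS lattice; the assertion that congruence spreading in a non-slim PS lattice decomposes into exactly these three kinds of steps is the very statement being proved, so the argument is circular precisely at the decisive point --- the point you yourself flag as ``the real work.'' Your use of the Eye Lemma is also too weak to fill the hole: you invoke it only as ``inserting an eye does not change the congruence structure in an essential way,'' whereas its actual force is the exact description of the new quasiorder $\tau$ as the least quasiorder containing $\nu\uu\set{(\bga,\bgb),(\bgb,\bga)}$, i.e., the only new collapse created by the eye is the identification of the colors of the two upper edges of the cell; nothing in your plan exploits this.

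The paper's proof runs in the opposite direction: induction on the number of eyes. In the base case the lattice is slim and the Swing Lemma is applied globally, not locally. In the induction step one inserts a single eye $e$ into a $4$-cell $S=\set{0_S,v,w,1_S}$ of a lattice $L$ already known to satisfy the statement, and uses the Eye Lemma in the precise form above: if $\fp$ congruence spreads to $\fq$ in $\Le$, then (C2) gives a chain of color comparisons in $\tau$, each step of which either lies in $\nu$ --- handled by the induction hypothesis in $L$ and then patched through $S\uu\set{e}$ by Lemma~\ref{L:inM} --- or is the single new pair $(\bga,\bgb)$ or $(\bgb,\bga)$ --- handled by routing through $[v,1_S]$ and $[w,1_S]$ and a switch inside the inserted $\SM 3$, again via Lemma~\ref{L:inM}; a short separate argument treats the case $e\in\fp\uu\fq$. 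Without this induction, or some substitute of comparable substance for decomposing congruence spreading in the non-slim case, your proposal leaves the main difficulty unproved.
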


\begin{proof} The sufficiency is straightforward by Lemma~\ref{collapse2L}. 

We prove the necessity by induction on the number of eyes in $L$. 
If this number is $0$, then $L$ is slim, 
so the statement follows from the Swing Lemma. 

So assume the validity of the lemma for a planar semimodular lattice $L$ and 
insert an eye $e$ into a $4$-cell 
$S=\set{0_S,v,w,1_S}$.
Let $M = S \uu \set{e}$.
We want to prove that the General Swing Lemma$'$ holds in $L^\tup{e}$.
To accomplish this assume that $\fp,\fq$ are prime intervals of $L^\tup{e}$ 
and $\fp$ congruence spreads to $\fq$. 
We have to find a \PSc sequence from $\fp$ to $\fq$. 

Let us assume that 
\begin{equation}\label{E:pq}
   \fp, \fq \ci L.
\end{equation}
By the Eye Lemma, $\gge$ is a quasi-coloring. 
Since $\fp$ congruence spreads to $\fq$ in $L^\tup{e}$,  
(C2) yields that $(\gge \fq, \gge\fp) \in \tau$.
Hence, there is a sequence 
$\bgd_0=\gge\fq$, $\bgd_1, \dots, \bgd_{n-1}, \bgd_n=\gge\fp$ 
of elements of~$Q=\Conj L$ such that  
$(\bgd_{i-1},{\bgd_i}) \in 
\set{(\bga,\bgb),(\bgb,\bga)} 
\uu \nu$ for $i\in\set{1,\dots,n}$. 
Let $\fr_0=\fq$ and $\fr_n=\fp$.  
For each $i\in\set{1,\dots,n-1}$, 
the surjectivity of $\gg$ allows us 
to pick a prime interval $\fr_i\in\Prime(L)$ such that $\gg\fr_i=\bgd_i$; 
note that this equality holds also for $i\in\set{0,n}$, 
since $\fp,\fq\in\Prime(L)$ and $\gge$ extends $\gg$. 
For  $i\in\set{1,\dots,n}$, there are two cases.

First, assume that $(\bgd_{i-1},{\bgd_i}) \in \nu$. 
Since $(\gg\fr_{i-1},\gg\fr_{i}) 
= (\bgd_{i-1},{\bgd_i}) \in \nu$,  
(C1) gives that $\conL{\fr_{i-1}} \ci \conL{\fr_{i}}$, 
whereby the induction hypothesis yields a \PSc sequence 
$\vec s_i{}'$ from $\fr_i$ to $\fr_{i-1}$ in $L$. 
If two consecutive members of $\vec s_i{}'$ belong to $\Prime(S)$, 
then $\vec s_i{}'$ is not a \PSc sequence in $L^\tup{e}$, 
but Lemma~\ref{L:inM} helps in turning it into a \PSc sequence 
$\vec s_i$ from $\fr_i$ to $\fr_{i-1}$ in $L^\tup{e}$. 

Second, assume that $(\bgd_{i-1},{\bgd_i}) = (\bga, \bgb)$ or $(\bgb,\bga)$. 
By reason of symmetry, we can assume that $(\bgd_{i-1}, \bgd_i) = (\bga, \bgb)$. 
Since $\gg(\fr_{i-1})=\bgd_{i-1}=\bga=\gg[v,1_S]$, it follows that 
$\conL{\fr_{i-1}} \ci \conL{v,1_S}$ by (C1). 
As in the first case, the induction hypothesis 
and Lemma~\ref{L:inM} yield  a \PSc sequence~$\vec s_i{}'$  
from $[v,1_S]$ to $\fr_{i-1}$ in $L^\tup{e}$. 
Similarly, we have a \PSc sequence $\vec s_i{}''$ 
from $\fr_i$ to $[w,1_S]$ in $L^\tup{e}$. 
Since Lemma~\ref{L:inM} gives a \PSc sequence 
$\vec s_i{}'''$ from $[w,1_S]$ to~$[v,1_S]$ in $L^\tup{e}$, 
the concatenation of $\vec s_i{}''$, $\vec s_i{}'''$, and $\vec s_i{}'$ is 
a \PSc sequence $\vec s_i{}$ from $\fr_i$ to $\fr_{i-1}$ in $L^\tup{e}$.

Finally, the concatenation of $\vec s_n$, $\vec s_{n-1}$, \dots, $\vec s_1$ 
is a \PSc sequence from $\fp$ to $\fq$ in $L^\tup{e}$, as required.

It remains to prove this lemma if \eqref{E:pq} fails.
In this case, we have that $e \in \fp \uu \fq$. 
If $e \in \fp$, then $\fp \in \Prime(M)$. Let $\fp'$ be any prime interval
in $S \ci L$. Then $\con{\fp'} = \con{\fp}$ in $L^\tup{e}$ and,  
by Lemma~\ref{L:inM}, there is a \PSc sequence from $\fp$ to $\fp'$ in
$L^\tup{e}$. If $e \nin \fp$, just set $\fp' = \fp$.
Similarly, if $e \in \fq$, then, for any prime interval $\fq'$ in $S$,
$\con{\fq'} = \con{\fq}$ and there is
a \PSc sequence from $\fq'$ to $\fq$ in~$L^\tup{e}$. Otherwise, set $\fq'=\fq$. Then
$\fp'$ congruence spreads to $\fq'$ in $L^\tup{e}$ and
$\fp'$, $\fq'$ satisfy~\eqref{E:pq}. Thus there is a \PSc sequence
from $\fp'$ to $\fq'$ in $L^\tup{e}$. 
Combining these \PSc sequences, we get a
\PSc sequence in $L^\tup{e}$ from $\fp$ to $\fq$.
\end{proof}

\section{The Tab Lemma for finite lattices}\label{S:tab}

We now state our main technical lemma for finite lattices.

\begin{lemma}[Tab Lemma]\label{mainL}
Let $L$ be a finite lattice, and let $m$ be a tab of $L$
in the covering multi-diamond $[o,i]$. 
Let $K= L - \set{m}$, a~sublattice of $L$. 
Let $\bga \in \Con{K}$ and let $\bgb = \conL{\bga}$. 
Then 
\[
   \bgb\restr K =
      \begin{cases}
         \bga,                  &\text{if $\ncng o = i(\bgb)$};\\
         \bga \jj_K \conK{o,i}, &\text{otherwise.}
   \end{cases}
\]
\end{lemma}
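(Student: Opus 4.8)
\emph{Proof plan.} Write $R$ for the right-hand side of the claimed formula; I would establish $\bgb\restr K\supseteq R$ and $\bgb\restr K\ci R$ separately. The inclusion $\bgb\restr K\supseteq R$ is routine: $\bga\ci\conL{\bga}=\bgb$ and $\bga\ci K^2$ give $\bgb\restr K\supseteq\bga$, while if $(o,i)\in\bgb$ then the congruence $\bgb\restr K$ of $K$ collapses $o$ and $i$, hence contains $\conK{o,i}$, so $\bgb\restr K\supseteq\bga\jj_K\conK{o,i}$. For the reverse inclusion, in each of the two cases I would exhibit a congruence $\bgd$ of $L$ with $\bga\ci\bgd$ and $\bgd\restr K=R$; then $\bgb=\conL{\bga}\ci\bgd$, whence $\bgb\restr K\ci\bgd\restr K=R$. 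The one structural fact I use is that the length-two modular lattice $\SM n$ ($n\ge 3$) is simple, so the congruence $\bgb\restr{[o,i]}$ of $[o,i]$ is either the identity $\Delta$ or the full relation $\nabla$.

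In the case $(o,i)\notin\bgb$, so $\bgb\restr{[o,i]}=\Delta$ and $R=\bga$, I would first show that $\set{m}$ is a class of $\bgb$. Indeed, if $(x,m)\in\bgb$, then $((x\jj o)\mm i,\,m)\in\bgb$ since $m\jj o=m$ and $m\mm i=m$; both entries lie in $[o,i]$, so $(x\jj o)\mm i=m$; as $i$ is the only cover of $m$, this forces $x\jj o=m$, that is, $x\le m$, and dually $x\ge m$, so $x=m$. Since $\bga\ci\bgb$, it follows that $\bga$ contains no pair $(a,b)$ with $a\le o$ and $b\nleq o$: otherwise $(a\jj m,b\jj m)=(m,b\jj m)\in\bgb$ with $b\jj m\ne m$ (because $b\jj m=m$ would give $b\le o$), contradicting that $\set{m}$ is a $\bgb$-class. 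Dually, $\bga$ contains no pair $(a,b)$ with $a\ge i$ and $b\ngeq i$. Consequently no $\bga$-class contains both $o$ and $i$, so every $\bga$-class, being an interval of $K$ that avoids $m$, is an interval of $L$. I would then verify, using Lemma~\ref{l33L}, that $\bgd:=\bga\uu\set{(m,m)}$ is a congruence of $L$: its classes are intervals of $L$; the only coverings of $L$ meeting $\set{m}$ are $o\prec m$ and $m\prec i$; every instance of the cover condition, or of its dual, that involves $m$ is either vacuous---because $m$ has a unique cover on each side---or is ruled out by the non-existence of the pairs described above; and the instances not involving $m$ reduce to the cover condition for $\bga$ inside $K$. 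Hence $\bgb\ci\bgd$ and $\bgb\restr K\ci\bgd\restr K=\bga$.

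In the case $(o,i)\in\bgb$, so $\bgb\restr{[o,i]}=\nabla$ and $\bgb$ collapses $[o,i]$, put $\bgg:=\bga\jj_K\conK{o,i}$, so that $\bgb\restr K\supseteq\bgg$ by the routine direction. Here $\bgg$ collapses $[o,i]\cap K$; let $D$ be the $\bgg$-class of $o$---it contains $i$ and every atom of $[o,i]$ other than $m$---and let $\bgd$ be the equivalence relation on $L$ obtained from $\bgg$ by replacing $D$ with $D\uu\set{m}$. Its classes are intervals of $L$, the enlarged one being $[\min D,\max D]_L$. I would then check the cover condition of Lemma~\ref{l33L} for $\bgd$: the instances not involving $m$ reduce to $\bgg$ in $K$; if $m$ occurs as one of the two covers of some element $x$, then necessarily $x=o$, and for the other cover $y\ne m$ of $o$ one has $m\jj y=i\jj y$, which is $\bgg$-related (hence $\bgd$-related) to $o\jj y=y$ because $(o,i)\in\bgg$ and $o\le y$, and this is exactly what the condition demands; the remaining instances, and the dual ones, are symmetric or vacuous. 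Thus $\bga\ci\bgg\ci\bgd\in\Con L$, so $\bgb\ci\bgd$ and $\bgb\restr K\ci\bgd\restr K=\bgg$.

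The step I expect to be the main obstacle is the first case: one must identify the correct invariant of $\bga$---that $\bga$ glues nothing below $o$ to anything not below $o$, and dually at $i$---and recognize, via Lemma~\ref{l33L}, that this invariant is precisely what allows $\bga$ to be extended over $L$ with $m$ kept isolated. Once the observation that $\set{m}$ is a $\bgb$-class is available, the remainder is a routine verification of the cover condition, organized around the fact that the tab $m$ has a unique upper cover and a unique lower cover.
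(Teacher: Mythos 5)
Your proposal is correct and takes essentially the same route as the paper: in each case you extend the appropriate congruence of $K$ ($\bga$, respectively $\bga \jj_K \conK{o,i}$) to an equivalence relation on $L$ whose classes are intervals (keeping $\set{m}$ a singleton class, respectively absorbing $m$ into the class of $[o,i]$), verify the covering condition of Lemma~\ref{l33L}, and conclude by the minimality of $\conL{\bga}$. The only organizational difference is that in the first case you first prove $\set{m}$ is a $\bgb$-class (via simplicity of $\SM n$) and deduce constraints on $\bga$, whereas the paper rules out the problematic covering configurations directly from $\bgb$ and the simplicity of $[o,i]$.
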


Note that the first case means that $\bgb$ 
is an extension of $\bga$.
\begin{proof}
We first consider the case $\ncng o = i (\bgb)$. 
We extend the congruence relation $\bga$ of $K$
to the equivalence relation $\bgg$ of $L$ 
by making the equivalence class of $m$ the singleton
$\set{m}$.
We observe that we cannot have $x < m < y$ 
for some $x$, $y \in K$ with $\cng x = y (\bga)$;
indeed, $x \leq m_* < m < m^* \leq y$, so, 
since $o= m_*$ and $i = m^*$, we would have
$\cng o = i (\bga)$ and thereby  $\cng o = i (\bgb)$.

Thus all the equivalence classes of $\bgg$ are
intervals in $L$ (the intervals in $K$ and $\set{m}$)
and we can apply Lemma~\ref{l33L} 
to show that $\bgg$ is a congruence relation on $L$. 

Let $x, y, z$ be distinct elements of $L$ 
with $y, z$ covering $x$ and $\cng x = y (\bgg)$.
Since the equivalence class of $m$ is a singleton, it follows that $x \neq m$ and $y \neq m$.
But $z = m$ is also impossible. Indeed, $\cng x = y (\bga)$ and so $\cng x = y (\bgb)$ and $\bgb$
is a congruence relation on $L$, whereby $\cng y \jj z = z (\bgb)$, and $y \jj z > z$. Since $m^* = i$, 
from $z=m$ it would follow that $\cng i = m (\bgb)$, that is, 
$\cng o = i (\bgb)$,
contradicting our hypothesis that  
$\ncng o = i (\bgb)$---see~Figure~\ref{zismF}.

Consequently,
$x$, $y$, $z$ are all elements of $K$, and $\cng x = y (\bga)$. 
Then $\cng z = y \jj z (\bga)$,
that is $\cng z = y \jj z (\bgg)$. This argument and its dual show that $\bgg$ is a congruence relation
on $L$.

Then, it is immediate from its definition that 
$\bgg = \conL{\bga} = \bgb$ and $\bgb\restr K = \bga$.

It remains to consider the case $\cng o = i (\bgb)$. 
Let us denote the congruence relation
$\bga \jj_K \conK{o,i}$ of $K$ by $\bga'$. 
We extend $\bga'$ to  an equivalence relation $\bgg$ on $L$
by setting $\cng x = y (\bgg)$ for all $x,y \in [o,i]$. 
Since $o = m_*$ and $i = m^*$, and $\cng o = i (\bga')$ 
it follows that all
equivalence classes of $\bgg$ are intervals in~$L$. 
We then apply Lemma~\ref{l33L} to $\bgg$
to show that $\bgg$ is a congruence relation on~$L$. 

Let $x, y, z$ be distinct elements of $L$ with $y$, $z$ covering $x$ and $\cng x = y (\bgg)$.

If $x, y, z$ are all elements of $K$, then $\cng x = y (\bga')$ and so $\cng y = y \jj z (\bga')$,
that is $\cng y = y \jj z (\bgg)$.

The case $x = m$ is impossible, since $m$ has only one upper cover.

The case $y = m$ implies that $z \in K$ and that $o = x < z$, since $o = m_*$. 
Since $y \jj z > y =m$, we have that
$y \jj z \geq i > y$. Thus
\begin{equation*}
 \cngd {y \jj z = i \jj z} ={o \jj  z = z} (\bga'),
\end{equation*}  
that is, $\cng y \jj z = z (\bgg)$---see Figure~\ref{yismF}.

The case $z = m$  implies that $y \in K$ and $x = o$. 
Thus $\cng y = o (\bga')$. Furthermore,
$y \jj z \geq i$ since $y \jj z > z = m$, whereby
\begin{equation*}
\cngd {y \jj z = y \jj i} = {o \jj i = i} (\bga'),
\end{equation*}
that is, $\cng y \jj z = i (\bgg)$---see Figure~\ref{zism2F}.
But $\cng i = m (\bgg)$ by the definition of~$\bgg$. 
Consequently, $\cng y \jj z = {m = z}(\bgg)$ since $\bgg$ is an equivalence relation.

Thus by the above argument and its dual, $\bgg$ is a congruence relation on $L$.

Now $\bga \ci \bgg$. Thus $\bgb =\conL{\bga} \leq \bgg$. Since, clearly,
\begin{equation*}
\bgg\restr K = \bga' = \bga \jj_{K} \conK{o,i},
\end{equation*}
 it follows that
$\bgb\restr K = \bga' =  \bga \jj_{K} \conK{o,i}$.
\end{proof}

Now we shall discuss some applications of the Tab Lemma.
In these discussions, $L$ is a finite lattice, 
$m$ is a tab in a covering multi-diamond $[o,i]$ of $L$, 
and the sublattice $K = L - \set{m}$.

We first make a very simple observation.
\begin{lemma}\label{simpL}
Let $\fp$ and $\fq$ be prime intervals in $K$. 
If $\fp$ is prime-perspective down
to $\fq$ in $L$, then $\fp$ is prime-perspective down to $\fq$ in $K$, 
and dually for prime-per\-spective~up.
\end{lemma}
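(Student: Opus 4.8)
The plan is to unpack the definition of prime-perspectivity and verify directly that the single meet and join that it involves are already computed inside $K$, not merely inside $L$. Recall that $\fp \pperspdn \fq$ in $L$ means $0_\fp \jj 1_\fq = 1_\fp$ (the join taken in $L$) and $0_\fp \mm 1_\fq \le 0_\fq$ (the meet taken in $L$). Since $\fp,\fq$ are prime intervals of $K$, all four endpoints $0_\fp,1_\fp,0_\fq,1_\fq$ lie in $K$. Because $K = L - \set m$ is a sublattice of $L$ (as asserted in the Tab Lemma and recalled just before this lemma), the join $0_\fp \jj 1_\fq$ and the meet $0_\fp \mm 1_\fq$, computed in $K$, coincide with those computed in $L$. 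Hence the two defining equations/inequalities, being statements purely about these endpoints and these two lattice operations applied to them, transfer verbatim from $L$ to $K$. This gives $\fp \pperspdn \fq$ in $K$. The dual argument (using that $K$ is a sublattice, hence closed under meets too) handles prime-perspective up.

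First I would state that $0_\fp,1_\fp,0_\fq,1_\fq \in K$ since $\fp,\fq \in \Prime(K)$. Next I would invoke that $K$ is a sublattice of $L$, so that for any $a,b \in K$ the elements $a \jj b$ and $a \mm b$ are the same whether computed in $K$ or in $L$. Then I would simply rewrite the hypothesis $0_\fp \jj 1_\fq = 1_\fp$ and $0_\fp \mm 1_\fq \le 0_\fq$ as statements in $K$, concluding $\fp \perspdn [0_\fp \mm_K 1_\fq, 1_\fq]$ with $0_\fp \mm_K 1_\fq \le 0_\fq$, which is exactly $\fp \pperspdn \fq$ in $K$. Finally I would note that the case of prime-perspective up is obtained by the order-dual of this argument.

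There is essentially no obstacle here: the only point that needs to be made explicit is that $K$ is a sublattice, so that the relevant binary operations do not change under passage from $L$ to $K$ — and this is given. The lemma is flagged in the text as "a very simple observation," and the proof is correspondingly a one-line unwinding of definitions; I would keep it to two or three sentences.
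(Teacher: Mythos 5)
Your proof is correct and matches the paper's argument in substance: the paper likewise just checks that everything witnessing the prime-perspectivity lies in $K$ (noting that the only extra element, $0_\fp \mm 1_\fq$, is meet-reducible or an endpoint, hence distinct from the doubly-irreducible $m$), which is exactly the sublattice-closure fact you invoke directly. So this is essentially the same one-line unwinding of the definition.
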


\begin{figure}[t!]
\centerline{\includegraphics{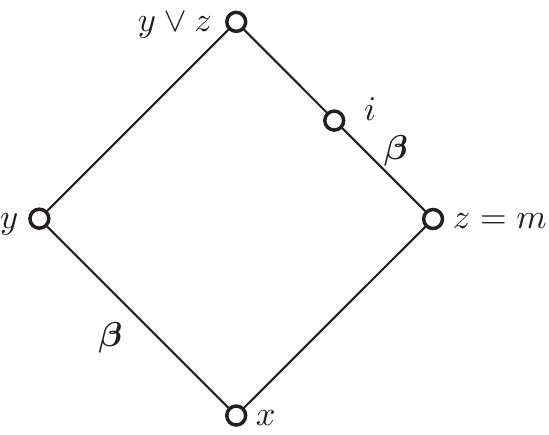}}
\caption{Case $\ncng o = i (\bgb)$ and $z = m$ of Lemma~\ref{mainL}}\label{zismF}

 \bigskip
 
 \bigskip

\includegraphics{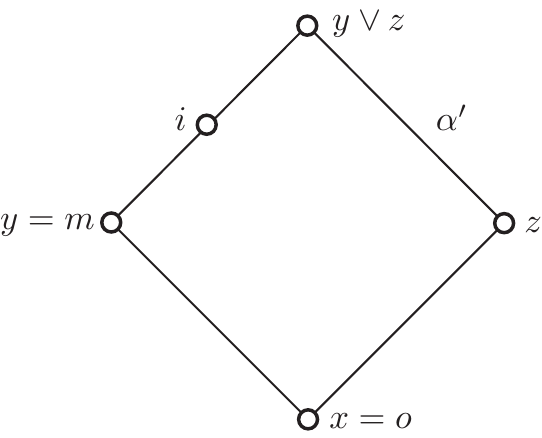}
\caption{Case $\cng o = i (\bgb)$ and $y = m$ of Lemma~\ref{mainL}}\label{yismF}

 \bigskip
 
 \bigskip

\centerline{\includegraphics{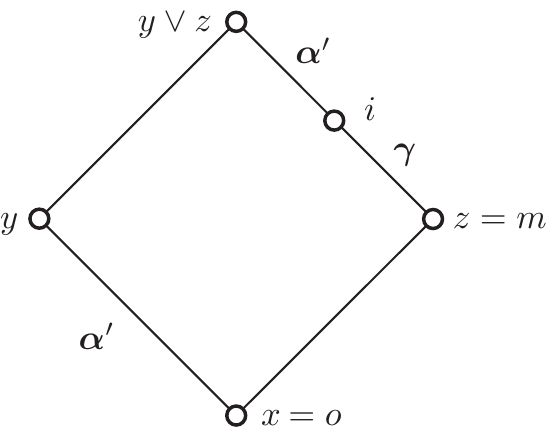}}
\caption{$\cng o = i (\bgb)$ and $z = m$ case of Lemma~\ref{mainL}}\label{zism2F}
\end{figure}

\begin{proof}
Clearly, we may assume that $\fp$ and $\fq$ are distinct intervals. 
Now the end-points $0_{\fp}$,
$1_{\fp}$, $0_{\fq}$, $1_{\fq}$ are all distinct from $m$. 
There is at most one other
element involved, which is $0_{\fp} \mm 1_{\fq}$, 
which is meet-reducible,
and so is also distinct from $m$. 
Thus the prime-perspectivity occurs in $K$.
\end{proof}

\begin{lemma}\label{sm0L}
Let $\fp$ be a prime interval in $K$. Then $\cng o = i (\conL{\fp})$ if{f} there is a
prime interval $\fq$ in $K$ with $\cng 0_{\fq} = 1_{\fq} (\conK{\fp})$ and either
$1_{\fq} = i$ or, dually, $0_{\fq} = o$.
\end{lemma}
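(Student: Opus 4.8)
The plan is to read the statement through the Tab Lemma. Since $\fp \ci K$, set $\bga = \conK{\fp} \in \Con K$ and $\bgb = \conL{\fp} = \conL{\bga}$. Then $\cng o = i(\conL\fp)$ is precisely the first-versus-second-case dichotomy of Lemma~\ref{mainL}: by that lemma, $\cng o = i(\bgb)$ holds if and only if $\bgb\restr K = \bga \jj_K \conK{o,i}$ strictly exceeds $\bga$, equivalently, $\conK{o,i} \not\le \bga = \conK\fp$ is \emph{false}, i.e. $\conK{o,i} \le \conK\fp$. So the task reduces to proving the purely $K$-internal equivalence: $\conK{o,i} \le \conK\fp$ if and only if there is a prime interval $\fq$ in $K$ with $\cng 0_\fq = 1_\fq(\conK\fp)$ and either $1_\fq = i$ or $0_\fq = o$.

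For the forward direction, suppose $\conK{o,i} \le \conK\fp$, so $\cng o = i(\conK\fp)$. The interval $[o,i]$ has length $2$; pick any dual atom $d$ of $[o,i]$ that is a \emph{proper join} in $L$ (the multi-diamond has at least $n\ge 3$ atoms, so such $d$ exists and lies in $K$, being meet-reducible; in fact any atom $a$ of $[o,i]$ distinct from $m$ works for $\fq = [o,a]$ with $0_\fq = o$). Indeed, since $\cng o = i(\conK\fp)$ and $o \prec a \le i$ in $K$, we get $\cng o = a(\conK\fp)$, so $\fq = [o,a]$ is the desired prime interval in $K$ with $0_\fq = o$. Thus the forward direction is immediate, once one checks that $[o,i]$ really has an atom in $K$ — which it does, since a multi-diamond $\SM n$ with $n\ge 3$ has at least three atoms and only one of them, namely $m$, is removed.

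For the converse, suppose $\fq$ is a prime interval of $K$ with $\cng 0_\fq = 1_\fq(\conK\fp)$ and, say, $1_\fq = i$ (the case $0_\fq = o$ is dual). If $0_\fq \ge o$ then $\fq$ is a prime interval inside $[o,i]$ with top $i$, so $\cng 0_\fq = i(\conK\fp)$ forces $\cng o = i(\conK\fp)$ because collapsing any length-one subinterval reaching the top of the length-two interval $[o,i]$ collapses all of $[o,i]$ (any two dual atoms of $[o,i]$ join to $i$ and meet at $o$, so they are perspective, and each is perspective to $m$-free atoms below; concretely $\conK{o,i}$ is join-irreducible and equals $\conK{0_\fq,i}$ for every dual atom $0_\fq$ of $[o,i]$ lying in $K$). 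If instead $0_\fq \not\ge o$, then $0_\fq \mm o < o \prec i = 1_\fq$, and $0_\fq \jj o = i$ since $0_\fq \prec i$ and $0_\fq \not\ge o$ while $o \le i$; hence $\fq = [0_\fq, i] \perspdn [o, i]$ in $K$ (this perspectivity is down, with $0_\fq \jj i = i$, wait—rather $[o,i]$ is down-perspective onto $\fq$, equivalently $\fq$ is up-perspective to a subinterval: we have $0_\fq \mm i$... ) — more carefully, from $0_\fq \jj o = i$ and $0_\fq \mm o \le o$ we get that $[o,i]$ is up-perspective to $[0_\fq\mm o,\ 0_\fq]$ and $[0_\fq\mm o, 0_\fq]$ is a subinterval containing $\fq$'s lower part; the cleanest statement is that $\fp$ collapses $\fq$, $\fq$ is prime-perspective up to $[o,i]$ (since $0_\fq \jj o = i$ gives $\fq \pperspup [o,i]$ after noting $0_\fq \mm o \le o$), hence $\cng o = i(\conK\fp)$ follows from $\cng 0_\fq = i(\conK\fp)$ by prime-perspectivity. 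The one point needing care — and the main obstacle — is this last congruence bookkeeping: verifying that $1_\fq = i$ together with $\cng 0_\fq = 1_\fq(\conK\fp)$ really does yield $\cng o = i(\conK\fp)$ in \emph{all} positions of $0_\fq$ relative to $o$, using only that $[o,i]$ has length $2$ and semimodularity of $L$ (hence of $K$). I would handle it by the case split $0_\fq \ge o$ versus $0_\fq \not\ge o$ as above, invoking Lemma~\ref{l33L} or a direct perspectivity argument rather than grinding through the congruence generation explicitly.
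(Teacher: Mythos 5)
Your opening reduction is where the argument breaks. The Tab Lemma does not say that $\cng o = i (\conL{\fp})$ is equivalent to $\conK{o,i} \leq \conK{\fp}$: its second case says only that $\conL{\fp}\restr K = \conK{\fp} \jj_K \conK{o,i}$, and this join can strictly exceed $\conK{\fp}$, so nothing forces $\conK{o,i} \leq \conK{\fp}$. Concretely, let the covering multi-diamond have exactly three atoms, say $L = \SM 3$ with atoms $a, b, m$ and $\fp = [o,a]$. Then $[o,i]$ is simple in $L$, so $\cng o = i (\conL{\fp})$; but $K = L - \set{m}$ is a covering square in which $\conK{\fp}$ collapses only $[o,a]$ and $[b,i]$, so $\conK{o,i} \nleq \conK{\fp}$. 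The lemma itself is fine here ($\fq = [b,i]$ works), so the ``purely $K$-internal equivalence'' you reduce to is false in its ``if'' direction, and your converse argument for it indeed rests on a false claim: that $\conK{o,i}$ is join-irreducible and equals $\conK{0_{\fq},i}$ for every dual atom $0_{\fq} \in K$ of $[o,i]$. This is exactly the pitfall the paper flags later (``$[o,i] \ii K$ need not be simple''): when the multi-diamond has only three atoms, $[o,i] \ii K$ is a covering square. The correct easy direction must pass through $m$: from $1_{\fq} = i$ and $0_{\fq} \neq m$ one gets $\fq \pperspdn [o,m]$ in $L$, hence $\cng o = m (\conL{\fp})$, and simplicity of $[o,i]$ \emph{in $L$} then gives $\cng o = i (\conL{\fp})$.

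The genuinely hard direction is not addressed at all by your plan. The lemma asks: from $\cng o = i (\conL{\fp})$ --- a collapse that may exist only because the congruence spreads through $m$ inside $L$ --- produce a prime interval $\fq$ of $K$ that is collapsed already by $\conK{\fp}$, the congruence generated in $K$, with $1_{\fq} = i$ or $0_{\fq} = o$. Your ``forward direction'' instead assumes $\cng o = i (\conK{\fp})$, a strictly stronger hypothesis, which makes the step trivial but proves a different statement. The paper's proof of this direction uses the Prime-Projectivity Lemma: take a sequence of prime-perspectivities in $L$ from $\fp$ to $[m,i]$, consider the first prime interval $\fr'$ containing $m$, note via Lemma~\ref{simpL} that all earlier steps stay in $K$ (so its predecessor $\fr$ satisfies $\cng 0_{\fr} = 1_{\fr} (\conK{\fp})$), and then split according to $\fr' = [m,i]$ or $\fr' = [o,m]$ to manufacture $\fq$ with $1_{\fq} = i$ (namely $\fq = [a,i]$ for another atom $a$ of the multi-diamond, or $\fq = [b,i]$ for a lower cover $b$ of $i$ above $0_{\fr} \mm i$). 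Some analysis of this kind carried out in $L$, not in $K$, is indispensable, and nothing in your proposal replaces it.
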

\begin{proof}
First, assume that there is such a prime interval $\fq$ with $1_{\fq} = i$. Now 
$0_{\fq} \neq m$ and so $0_{\fq} \jj m = 1_{\fq}$. Furthermore,
$0_{\fq} \mm m < m$. Since $m_* = o$ in~$L$, we have
$0_{\fq} \mm m \leq o$, that is, the prime interval $\fq$ is 
prime-perspective down to
the prime interval $[o,m]$. Since $\cng 0_{\fq} = 1_{\fq} (\conL{\fp})$,
we conclude that $\cng o = m (\conL{\fp})$. 
Since $[o,i]$ is a simple sublattice of $L$,
it follows that $\cng o = i (\conL{\fp})$, 
proving one direction of the equivalence.

Second, assume that $\cng o = i (\conL{\fp})$. Then 
$\cng m = i (\conL{\fp})$. Then there is a prime-projectivity,
a sequence of prime-perspectivities starting at $\fp$
and ending at the prime interval $[m, i]$. 
In this sequence, there is a first prime interval $\fr'$
containing $m$---it is not $\fp$, since $\fp \ci K$.  Thus there is an immediate  previous prime
interval $\fr$ in this sequence, which by duality, we may assume is
prime-perspective down to~$\fr'$.
By the choice of $\fr'$, $0_{\fr}$, $1_{\fr} \in K$ 
and, by Lemma~\ref{simpL},
\begin{equation}\label{eqE}
\cngd 0_{\fr} = 1_{\fr} (\conK{\fp}).
\end{equation}

The element $m$ is doubly-irreducible, 
so it occurs in only two prime intervals of~$L$, 
the prime interval $[m,i]$ and the prime interval $[o,m]$. 
So either $\fr' = [m,i]$ or $\fr' = [o,m]$.

We first consider the case $\fr' = [m,i]$. Then $0_{\fr} \mm i \leq m$. Since $m$ is 
meet-irreducible and $m_* = o$, we get that
$0_{\fr} \mm i \leq o$. 
But then $\fr$ is prime-projective down to $[a, i]$,  where $a$ is
any of the other atoms of the covering {multi-diamond} $[o,i]$---see Figure~\ref{sm1F}. 
So we can take $\fq = [a,i]$.
Observe that, in this case, $\fq$is actually an interval in $[o,i]$.

\begin{figure}[b!]
\centerline{\includegraphics{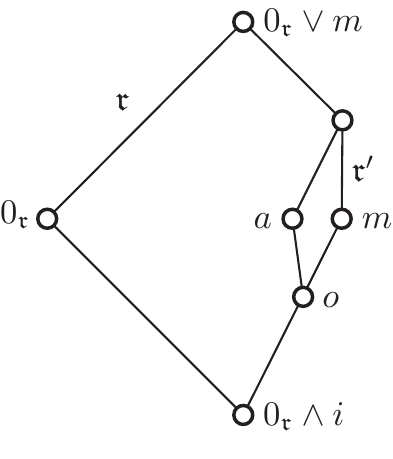}}
\caption{$\fr' = [m,i]$ case of Lemma~\ref{sm0L}}\label{sm1F}
\end{figure}

We are left with the case $\fr' = [o,m]$. 
Since $m \nin \fr$, it follows that $1_{\fr} > m$, 
and so $1_{\fr} \geq i = m^*$. 
Also, $0_{\fr} \mm i < i$, since
$0_{\fr} \mm  m \leq o < m$. 
Let $b$ be a lower cover  of $i$ in the interval $[0_{\fr} \mm i, i]$. 
By \eqref{eqE}, $\cng 0_{\fr} \mm i = i (\conK{\fp})$, 
and so $\cng b = i (\conK{\fp})$---see Figure~\ref{sm2F}. 
We then set $\fq = [b, i]$, concluding the proof. Observe
that in this case, $\fq$ need not be in $[o,i]$ 
since $b$ need not be~$\geq o$.
\end{proof}

\begin{figure}[thb]
\includegraphics{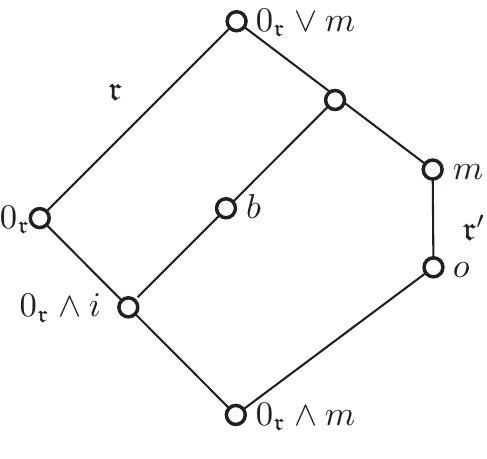}
\caption{$\fr' = [o,m]$ case of Lemma~\ref{sm0L}}\label{sm2F}
\end{figure}

If $L$ is semimodular, then we need not worry about the dual possibility.

\begin{lemma}\label{smL}
Let us further assume that $L$ is semimodular.  
Let $\fp$ be a prime interval in $K$. 
Then $\cng o = i (\conL{\fp})$ if{f} there is a
prime interval $\fq$ in $K$ 
with $\cng 0_{\fq} = 1_{\fq} (\conK{\fp})$ and $1_{\fq} = i$.

If $0_\fq \nin [o,i]$, 
then $\fq$ swings in $L$ to some prime interval $\fr$
in $[o,i] - \set{m}$.
\end{lemma}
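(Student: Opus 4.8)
The plan is to derive Lemma~\ref{smL} from Lemma~\ref{sm0L} together with Lemma~\ref{genSL}. First I would deal with the equivalence. The ``if'' direction is already contained in Lemma~\ref{sm0L}, so nothing is needed there. For the ``only if'' direction, suppose $\cng o = i(\conL{\fp})$. Lemma~\ref{sm0L} hands us a prime interval $\fq$ in $K$ with $\cng 0_\fq = 1_\fq(\conK{\fp})$ and with either $1_\fq = i$ or $0_\fq = o$; the point is to eliminate the second, ``dual'', alternative when $L$ is semimodular, replacing it by one of the first kind. So assume $0_\fq = o$ and $1_\fq = q$ where $o \prec q \le i$. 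If $q = i$ we are already in the first case (with a different name), so assume $q \ne i$, i.e.\ $q$ is one of the atoms of the multi-diamond $[o,i]$ and $q \ne m$ (as $m \notin K$). Now $m$ is join-irreducible in $L$ with unique lower cover $o$, the interval $[o,i]$ has length $2$, and $q \in [o,i]$ is a lower cover of $i$ with $q \notin \{m\}$; applying Lemma~\ref{genSL} with the interval $[o,i]$, the join-irreducible atom $m$, and the lower cover $q = x$ of $i$, we obtain an atom $a \in [o,i]$, $a \ne m$, $a \ne q$, such that $q$, $a$, $m$ generate an $\SfS7$ in which $a$ is the proper-join dual atom. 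But that is exactly the configuration in the definition of swing: $\fq = [o,q]$ swings to $[a,i]$, and by Lemma~\ref{collapseL}, $\conK{\fq} \ge \conK{a,i}$, hence $\cng a = i(\conK{\fp})$, and $[a,i]$ is a prime interval in $K$ with top $i$. Replacing $\fq$ by $[a,i]$ completes the equivalence.

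For the second assertion, assume $1_\fq = i$ and $0_\fq \notin [o,i]$. I would again invoke Lemma~\ref{genSL}, this time with the interval $[o,i]$, the join-irreducible atom $m$, and the lower cover $x := 0_\fq$ of $i$ (note $0_\fq \prec i$ since $\fq$ is prime with top $i$, and $0_\fq \notin [o,i]$ means in particular $0_\fq \notin \{m\}$, so the hypotheses of Lemma~\ref{genSL} are met). This produces an atom $a \in [o,i]$, distinct from $m$, such that $0_\fq$, $a$, $m$ generate an $\SfS7$ with $a$ the proper-join dual atom. Thus $a \in [o,i] - \{m\}$, and $[o,i]$ having length $2$, $[a,i]$ is a prime interval; moreover the $\SfS7$ exhibited is precisely the sublattice $S$ in the swing definition, with $0_\fq$ playing the role of $0_\fp$, the atom $m$ playing the role of the lower cover $a$ of the definition, and $a$ (our notation) being $0_\fq$'s counterpart, the unique proper-join dual atom. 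Hence $\fq = [0_\fq,i]$ swings in $L$ to $\fr := [a,i]$, a prime interval of $[o,i] - \{m\}$, as claimed.

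The main obstacle I anticipate is purely bookkeeping: making sure that the hypotheses of Lemma~\ref{genSL} are literally satisfied, in particular that the chosen lower cover of $i$ is genuinely outside $[o,i]$ (or at least $\ne m$, which is all the lemma needs), and that $m$ is join-irreducible \emph{in $L$} rather than merely in $K$ — this holds because $m$ is a tab, hence doubly-irreducible in $L$. A secondary point to check is that when $L$ is semimodular the ``dual'' branch of Lemma~\ref{sm0L} does not secretly produce something that cannot be converted; but semimodularity is exactly what lets Lemma~\ref{genSL} run (it is stated for finite semimodular lattices), so the conversion always goes through. One should also remark, as the statement of Lemma~\ref{sm0L} already does, that in the first branch $\fq$ may be taken inside $[o,i]$ whereas in general the $\fq$ we land on has $0_\fq$ possibly outside $[o,i]$ — which is why the second sentence of the lemma is phrased as a conditional. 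No deeper difficulty is expected.
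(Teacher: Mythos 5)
Your overall route is the paper's: use Lemma~\ref{sm0L} to produce the prime interval, and apply Lemma~\ref{genSL} with $x=0_\fq$ to get the swing statement. Your treatment of the ``if'' direction and of the second assertion is correct and matches the paper. The trouble is in your elimination of the dual alternative $0_\fq=o$, and it is a genuine gap, in two respects. First, you write ``assume $0_\fq=o$ and $1_\fq=q$ where $o\prec q\le i$'', but Lemma~\ref{sm0L} gives no bound $1_\fq\le i$: the element $1_\fq$ is merely some upper cover of $o$ in $K$, a priori not in $[o,i]$. The containment is in fact true, but proving it is exactly where semimodularity enters in this branch (and is how the paper argues): since $1_\fq\neq m$ and $1_\fq\mm m=o$, semimodularity gives that $1_\fq\jj m$ covers $m$, and $m^*=i$ forces $1_\fq\jj m=i$, whence $1_\fq\le i$. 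You assert the containment with no argument and never use semimodularity here at all.

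Second, and more seriously, you then apply Lemma~\ref{genSL} with $x=q\in[o,i]$, violating its explicit hypothesis $x\nin[o,i]$ (its proof needs $x\mm m<o$, which fails for $x\in[o,i]$). The conclusion you draw from it is false: three distinct atoms $q,a,m$ of the covering multi-diamond generate an $\SM 3$, not an $\SfS 7$, and $[o,q]$ does not swing to $[a,i]$ --- condition (ii) of the swing definition already fails because $1_{[o,q]}=q\neq i=1_{[a,i]}$. The conclusion you want is nevertheless easy to obtain without any swing: pick a third atom $a\in[o,i]\ii K$ with $a\neq m,q$ (it exists since the multi-diamond has at least three atoms); then $[o,q]$ is up-perspective to $[a,i]$ in $K$ (equivalently, joining the congruence $\cng o = q (\conK{\fp})$ with $a$ gives $\cng a = i (\conK{\fp})$, since $o\jj a=a$ and $q\jj a=i$), so $[a,i]$ is the desired prime interval with top $i$. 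With these two repairs --- the semimodularity argument for $1_\fq\jj m=i$ and the perspectivity in place of the spurious swing --- your proof coincides with the paper's.
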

\begin{proof}
We first
consider the case when there is a prime interval $\fq'$ in $K$ with
\begin{equation*}
\cngd 0_{\fq'} = 1_{\fq'} (\conK{\fp})
\end{equation*} 
and $0_{\fq'} = o$. Now $1_{\fq'}$ is distinct from $m$. 
By semimodularity, $1_{\fq'} \jj m$ covers $m$ and~$1_{\fq'}$. 
So $1_{\fq'} \jj m = i = m^*$. 
Then
\begin{equation*}
i = \cngd 1_{\fq'} \jj m = 1_{\fq'} (\conK{\fp}).
\end{equation*}
Setting $\fq = [1_{\fq'}, i]$
and noting that  then $0_\fq = 1_{\fq'} \in [o,i]$,
completes the proof in this case.

Otherwise, by Lemma~\ref{sm0L}, there is a prime interval $\fq$ in $K$ with
\begin{equation*}
\cngd 0_{\fq} = 1_{\fq} (\conK{\fp})
\end{equation*}
and with $1_\fq = i$. If $0_\fq \nin [o,i]$, 
then we apply Lemma~\ref{genSL}, with $x = 0_\fq$, to get
the prime interval $\fr = [a,i]$ in $[o,i] - \set{m}$, where $\fq$ swings to $\fr$ by use
of $m$ (and thus the swing is in $L$, but not necessarily in $K$).
\end{proof}

\section{The proof of the General Swing Lemma}\label{S:Swing}

Our proof of the General Swing Lemma for a planar semimodular lattice
will be by induction on the number of tabs in the lattice. 
We then need the following very easy result.

\begin{lemma}\label{veL}
Let $L$ be a finite semimodular lattice containing a doubly-irredu\-cible element $m$. Then
the sublattice $K = L - \set{m}$ is also semimodular.
\end{lemma}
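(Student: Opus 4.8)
The plan is to verify that the semimodularity condition is inherited by the sublattice $K = L \setminus \{m\}$, using the fact that deleting a doubly-irreducible element changes very few of the covering relations. First I would recall that semimodularity for a finite lattice is equivalent to the covering condition: whenever $u \prec v$ and $w$ is arbitrary with $u \mm w \prec u$ (equivalently $w \ngeq u$), then $w \prec v \jj w$; even more conveniently, it suffices to check that if two distinct elements $u_1, u_2$ both cover their meet $u_1 \mm u_2$, then $u_1 \jj u_2$ covers both $u_1$ and $u_2$. So I would take two elements $a, b \in K$ that cover $c = a \mm_K b$ in $K$ and try to show that $a \jj_K b$ covers both $a$ and $b$ in $K$.

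The key observation is that meets and joins in $K$ agree with those in $L$ except possibly where $m$ would be the answer. Since $m$ is doubly-irreducible in $L$, it has a unique lower cover $m_* = o$ and unique upper cover $m^* = i$, and $m$ is neither a meet nor a join of two elements incomparable to it. Concretely: for $x, y \in K$, $x \mm_L y = m$ would force $x, y \geq m$ (as $m$ is meet-irreducible, actually one would need $x \mm_L y = m$ with $m$ meet-irreducible, giving $x = m$ or $y = m$, impossible), so $x \mm_L y \in K$ and hence $x \mm_K y = x \mm_L y$; dually $x \jj_K y = x \jj_L y$. Thus $K$ is not merely a subset but the meet and join operations are literally the restrictions of those of $L$ — which is exactly what the excerpt already asserts by calling $K$ a sublattice. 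Therefore $c = a \mm_L b$ and, if $a \prec_K c$ held with a strict intermediate element of $L$ in between, that intermediate element would have to be $m$; so I must check that the covering $a \succ_K c$ transfers to $a \succ_L c$, i.e. that $m$ does not lie strictly between $c$ and $a$. If it did, then $c \leq o = m_*$ and $a \geq i = m^*$, but then $a > m > c$ in $L$ would already contradict $a \succ_K c$ being a covering in $K$ only if $m \in K$ — which it is not — so actually nothing prevents $m$ from sitting there. Here is where I pay attention: if $c < m < a$ in $L$, then $c \leq o$ and $i \leq a$, so $a/c$ has length $\geq 2$ in $L$; but $a$ covers $c$ in $K$, meaning $[c,a]_K = \{c, a\}$, so $o, i \notin [c, a]$, i.e. $o \notin [c,a]$ forces $o = c$? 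No: $o \in [c,a]_L$ since $c \le o \le m^* \le$ wait $o \le m \le a$ and $o \ge c$, so $o \in [c,a]_L$; if $o \ne c$ and $o \ne a$ then $o \in (c,a)_L \cap K$, contradicting $a \succ_K c$. So $o = c$ or $o = a$; since $o = m_* < m < a$ we get $o \ne a$, hence $o = c$; dually $i = a$. Then $[o,i]$ has length $2$, and $a = i$ covers $c = o$ in $K$ means $[o,i]_K = \{o, i\}$, i.e. the only elements of $L$ strictly between $o$ and $i$ is $m$ alone — so $[o,i]$ is a three-element chain in $L$ with middle $m$, contradicting $m$ being doubly-irreducible in a way... actually a chain $o \prec m \prec i$ is fine, but then $m$ is not in any $\SM 3$; regardless, in this situation $a \mm_K b = o$ with $a = i$, and I'd want $b$ also covering $o$ in $K$, so $b \in \{o, i\} \cup$ other covers of $o$; then $a \jj_K b = i = a$, and covering is trivial. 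I would handle this degenerate possibility directly.

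The main obstacle, then, is purely bookkeeping: confirming that a covering in $K$ is a covering in $L$ except in the one degenerate configuration where $o \prec m \prec i$ is a length-two interval collapsing to a two-element interval in $K$, and checking semimodularity holds there by inspection. Once covering relations transfer ($a \succ_K c \Rightarrow a \succ_L c$ outside the degenerate case), semimodularity of $L$ gives $a \jj_L b \succ_L a$ and $\succ_L b$; since $a \jj_L b = a \jj_K b$, and a covering in $L$ between two elements of $K$ is a fortiori a covering in $K$, we conclude $a \jj_K b$ covers $a$ and $b$ in $K$. So the proof is: (1) note $K$'s operations are restrictions of $L$'s because $m$ is doubly-irreducible; (2) show coverings in $K$ are coverings in $L$ (treating the degenerate interval separately); (3) apply semimodularity of $L$ and pull the conclusion back to $K$. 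I expect step (2)'s edge case to be the only subtlety, and it is dispatched in a line.
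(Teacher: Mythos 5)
You follow the same route as the paper: reduce semimodularity of $K$ to the covering condition (two elements covering their common lower cover must be covered by their join), transfer such coverings from $K$ to $L$, apply semimodularity of $L$, and note that a covering in $L$ between elements of $K$ is a covering in $K$. The bookkeeping parts are fine, but the one step that carries the mathematical content---your ``degenerate case'' $c=o\prec m\prec i=a$ with $[o,i]=\set{o,m,i}$---is not dispatched by what you wrote. In that configuration the second cover $b$ of $c$ in $K$ lies outside $[o,i]$ (it is distinct from $o$, $m$, and $i=a$), so $b\nleq i$ and your claim that $a \jj_K b = i = a$ is unjustified; moreover, even if that equality held, it would not make the covering condition ``trivial'': $a \jj_K b = a$ would force $b< a$, contradicting that $a\neq b$ both cover $c$ in $K$. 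So the edge case is not ``dispatched in a line'' as stated.

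The missing argument---which is precisely the paper's proof---uses semimodularity of $L$ together with the meet-irreducibility of $m$. First, $b$ covers $c=o$ in $L$ as well: the only element that could lie strictly between them is $m$, and then $b$ and $i$ would be two distinct upper covers of $m$. Hence $m$ and $b$ are distinct upper covers of $o$ in $L$, so by semimodularity $m \jj b$ covers $m$ and $b$; since $m$ has the unique upper cover $i$, this gives $m \jj b = i$, so $o \prec b \prec i$, that is, $c \prec b \prec a$ in $K$, contradicting $a \succ c$ in $K$. Thus the degenerate configuration cannot occur when a second cover exists, the coverings relevant to the covering condition do transfer to $L$, and the rest of your outline goes through. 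Without this use of the semimodularity of $L$ (and of the unique upper cover of $m$), your proposal has a genuine gap exactly where the hypothesis must be invoked.
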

\begin{proof}
Let $a, b, c \in K$ with $b\neq c$ covering $a$.
We have to show that $b$ and $c$ cover $a$ in~$L$; see G.~Gr\"atzer~\cite[Theorem 375]{LTF}.

Assume to the contrary that $b$ does not cover $a$ in $L$. 
Then $a \prec m \prec b$ in $L$. 
So~$m$ and $c$ are distinct upper covers of $a$. 
But $L$ is semimodular,
so $m \jj c$ covers both $m$ and $c$---see Figure~\ref{veF}. 
Since it is meet-irreducible, $m$ has only one upper cover, 
and so $m \jj c = b$. 
Thus, $b$ covers $c$, that is, $a \prec c \prec b$ in~$K$, 
contradicting $a \prec b$ in~$K$.
\begin{figure}[htb]
\centerline{\includegraphics{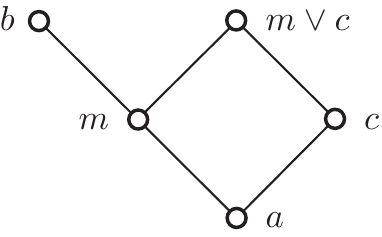}}
\caption{Lemma~\ref{veL}: $a \prec m \prec b$}\label{veF}
\end{figure}

Thus, $b$ and $c$ still cover $a$ in $L$. Then, by the semimodularity of $L$, $b \jj c$ covers
$b$ and $c$  in $L$ and so, certainly, in $K$. Consequently, $K$ is semimodular.
\end{proof}
We now prove the General Swing Lemma.

\begin{proof}[Proof of Lemma~\ref{L:GSL}]
First, let $\fp$, $\fq$ be prime intervals in $L$, and let there be such sequences
$\fp_0 = \fp, \fp_1, \dots, \fp_n$ and $\fq_0, \fq_1, \dots, \fq_n = \fq$.
By Lemma~\ref{collapse2L},  each $\fq_k$ is collapsed by $\con{\fp_k}$, and, by Lemma~\ref{m3L},
each $\fp_{k+1}$ is collapsed by $\con{\fq_k}$. 
Then $\fq$ is collapsed by $\con{\fp}$.

We now proceed in the other direction. Let $\fp$ and $\fq$ be
distinct prime intervals in $L$,
and let $\fq$ be collapsed by $\con{\fp}$. 
We proceed by induction on
the number of tabs of $L$.

Note that in a covering {multi-diamond} in any planar lattice $L'$ each atom, except possibly the
leftmost and rightmost, is  doubly-irreducible in $L'$, that is, 
it is a tab of $L'$.
So if there are no tabs in the lattice $L$, 
then $L$ has no covering {multi-diamond}, that is, 
is slim. So by the Swing Lemma,
$\fp$ is SPS projective to $\fq$. 
We then get the conclusion if there are no tabs.

Now let there be at least one tab in $L$. Take such a tab $m$,
an atom of a covering {multi-diamond} $[o,i]$ in $L$, and set $K = L - \set{m}$. Then,
by Lemma~\ref{veL}, $K$ is a planar semimodular lattice, and,
by the induction hypothesis, the General Swing Lemma holds for the lattice $K$.

If $m \in \fp$, then let $\fp'$ be a prime interval in $[o,i]$ that does not contain $m$.
Then by definition, $\fp$ switches to $\fp'$, and, since $\conL{\fp} = \conL{\fp'}$,
the prime interval $\fq$ is collapsed by $\conL{\fp'}$. 
Similarly, if $m \in \fq$, then there is a
prime interval $\fq' \ci [o,i]$ that switches to $\fq$, that does not contain $m$, but is collapsed by~$\fp$.

Thus to prove the General Swing Lemma, it suffices to assume that $m$ is an element of neither
$\fp$ nor $\fq$, that is, that $\fp$ and $\fq$ lie in $K$. If $\fq$ is collapsed by $\conK{\fp}$, then we are done, 
since $K$ satisfies the General Swing Lemma.

Otherwise, since $\fq$ is collapsed by $\conL{\fp}$,
it follows that $\conL{\fp}\restr K \neq \conK{\fp}$.
Since $\conL{\fp} = \conL{\conK{\fp}}$, we conclude by Lemma~\ref{mainL} that
\begin{equation}\label{firstE}
   \cngd o = i (\conL{\fp})
\end{equation}
and that
\begin{equation}\label{midE}
\conL{\fp}\restr K = \conK{\fp} \jj_K \conK{o,i}.
\end{equation}
By \eqref{firstE} and Lemma~\ref{smL}, there is a prime interval $\fr$ in $K$ with $1_\fr = i$
that is collapsed in $K$ by $\conK{\fp}$. Since
the General Swing Lemma holds for $K$, there are sequences of prime intervals in $K$, 
$\fp_0 = \fp, \fp_1, \dots, \fp_n$ and
$\fq_0, \fq_1, \dots, \fq_n = \fr$,
where $\fp_k$ is SPS projective to $\fq_k$ in $K$
for $k = 0,\dots, n$, and $\fq_k$ switches to $\fp_{k+1}$ 
in $K$ for $k=0,\dots, n-1$, provided that $n > 0$.

If $\fr \nci [o,i]$, then by  Lemma~\ref{smL}, $\fr$ swings in $L$ to some prime interval
$\fr'$ in~$[o,i] \ii K$. Then since $\fp_n$ is SPS projective in $K$ to $\fr$, it follows that $\fp_n$
is SPS projective in $L$ to $\fr'$, a prime interval in $[o, i] \ii K$.  So if $\fr$ is not in $[o,i]$,
we can replace $\fr$ by $\fr'$.

In either event, we have sequences of prime intervals in $K$, 
$\fp_0 = \fp, \fp_1, \dots, \fp_n$ and
$\fq_0, \fq_1, \dots, \fq_n$
where $\fp_k$ is SPS projective to $\fq_k$ in $L$
for $k = 0,\dots, n$, and $\fq_k$ switches to $\fp_{k+1}$ in
$K$ for
$k=0,\dots, n-1$, provided that $n > 0$, and where $\fq_n$ is an interval in $[o,i] \ii K$.

Since $\fq$ lies in $K$ and is collapsed
 by $\conL{\fp}$, it follows from \eqref{midE} that $\fq$ is collapsed by
$\conK{\fp} \jj_K \conK{o,i}$.
Choose an atom $a$ of the multi-diamond $[o,i]$ distinct from $m$ and not an element of
$\fq_n$.
Then
\begin{equation*}
\conK{\fq} \leq  \conK{\fp} \jj_K \conK{o,i} = \conK{\fp} \jj_K \conK{o,a} \jj_K \conK{a,i}.
\end{equation*}
(This is the best we can hope for, since $[o,i] \ii  K$ need not be simple.)
Since $\conK{\fq}$ is join-irreducible
and is not $\leq \conK{\fp}$, we conclude that
\begin{equation}\label{secondE}
\conK{\fq} \leq \conK{\fs} \text{ where } \fs = [0,a] \text{ or } \fs = [a,i].
\end{equation}
Since $a \nin \fq_n$, it follows that $\fq_n$  and $\fs$ are distinct prime intervals in $[o,i]$.
 Thus, $\fq_n$ switches in $L$ to $\fs$.

We now proceed to extend the sequences $\fp_0, \dots, \fp_n$
and $\fq_0, \dots, \fq_n$.

Set $\fp_{n+1} = \fs$. Then $\fq_n$ switches in $L$ to $\fp_{n+1}$.
By \eqref{secondE}, since $K$ satisfies the General Swing Lemma, there are
sequences of prime intervals
in $K$, $\fp_{n+1} = \fs,  \dots, \fp_r$ and
$\fq_{n+1}, \dots, \fq_r = \fq$ where $\fp_k$ is SPS projective in $K$
to $\fq_k$ for $k = n+1, \dots, r$ and $\fq_k$ switches in $K$ to $\fp_{k+1}$ for
$k=n+1, \dots, r-1$. Combining these sequences, we get the desired sequences
$\fp_0 = \fp, \dots, \fp_{n+1}, \dots \fp_r$ and
$\fq_0, \dots, \fq_n, \dots, \fq_r = \fq$, establishing the General Swing Lemma for $L$,
and so concluding the proof of the induction step.

Thus the proof of the General Swing Lemma for planar semimodular lattices is concluded.
\end{proof}

One should note that, in the General Swing Lemma, some of the SPS projectivities could be trivial---there could
be two successive switches in different multi-diamonds, or we could start off with a switch rather
than a proper SPS-projectivity.

\section{Discussion}\label{S:Discussion}
In the book \cite{CFL2}, Section 24.4 discusses 
some consequences of the Swing Lemma. We take up here only one aspect of this topic.
\begin{lemma}\label{L:disc}
Let $L$ be a planar semimodular lattice, and let $\bga,\bgb\in\Conj L$ be join-irreducible congruences. If $\bgb<\bga$, then there exist prime intervals $\fp$ and $\fq$ in $L$ such that 
\begin{enumeratei}
\item $\fp$ swings to $\fq$,
\item $\bgb\leq \con{\fq} <\con{\fp}\leq \bga$, and
\item $\con{\fp}$ covers $\con{\fq}$ in the order $(\Conj L;\leq)$.
\end{enumeratei}
\end{lemma}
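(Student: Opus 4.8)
The plan is to work in the quasi-ordered set $(\Conj L;\leq)$ and exploit the General Swing Lemma to translate the strict inequality $\bgb<\bga$ into a concrete chain of prime intervals connected by SPS projectivities and switches. First I would fix prime intervals $\fp^\ast$ and $\fq^\ast$ with $\con{\fp^\ast}=\bga$ and $\con{\fq^\ast}=\bgb$; such intervals exist because $\bga$ and $\bgb$ are join-irreducible, hence of the form $\con{\fr}$ for some prime interval $\fr$ (every join-irreducible congruence of a finite lattice is generated by a prime interval). Since $\bgb<\bga$ means $\fq^\ast$ is collapsed by $\con{\fp^\ast}$ but not conversely, the General Swing Lemma (Lemma~\ref{L:GSL}) gives sequences $\fp_0=\fp^\ast,\dots,\fp_n$ and $\fq_0,\dots,\fq_n=\fq^\ast$ with $\fp_k$ SPS projective to $\fq_k$ and $\fq_k$ switching to $\fp_{k+1}$.

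Next I would observe that along such a sequence the congruence values are weakly decreasing: $\con{\fp_k}=\con{\fq_k}$ whenever $\fq_k$ is obtained from $\fp_k$ by switches only or by a swing at the end would \emph{drop} the value, and a switch preserves the value by Lemma~\ref{m3L}; more precisely, an SPS projectivity from $\fp_k$ to $\fq_k$ gives $\con{\fq_k}\leq\con{\fp_k}$ by Lemma~\ref{collapse2L}, and a switch gives equality by Lemma~\ref{m3L}. Hence
\[
  \bga=\con{\fp_0}\geq\con{\fq_0}=\con{\fp_1}\geq\cdots\geq\con{\fq_n}=\bgb,
\]
and since $\bga>\bgb$ strictly, at least one of the SPS-projective steps must strictly decrease the value. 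Pick the \emph{first} index $k$ at which $\con{\fq_k}<\con{\fp_k}$; then $\bga=\con{\fp_k}$, so replacing $\fp$ by $\fp_k$ we have reduced to the case where an SPS-projectivity from $\fp$ drops the value from $\bga$. Now unwind the SPS sequence \eqref{Eq:SPS} for this SPS-projectivity: it consists of an up-perspectivity followed by down-perspectivities and swings, and perspectivities never change the congruence value, so again the first \emph{strict} drop happens at a swing, say at a step $\fr_j\rgr_{j+1}\fr_{j+1}$ which is a swing with $\con{\fr_j}=\bga$ and $\con{\fr_{j+1}}<\bga$. Setting $\fp:=\fr_j$ and $\fq:=\fr_{j+1}$ gives (i), and $\bgb\leq\con{\fq}<\con{\fp}=\bga$ follows since $\con{\fq}$ still lies $\geq\con{\fq^\ast}=\bgb$ by the same weak-monotonicity run forward from $\fr_{j+1}$ to $\fq^\ast$ (the tail of the sequence, after re-assembling, is still a valid SPS/switch sequence from $\fq$ down to $\bgb$), yielding (ii).

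For (iii) — that $\con{\fp}$ covers $\con{\fq}$ in $(\Conj L;\leq)$ — I would argue that we may choose the swing step minimally with respect to the \emph{value} $\con{\fp}$: among all pairs $(\fp,\fq)$ of prime intervals with $\fp$ swinging to $\fq$ and $\bgb\leq\con{\fq}<\con{\fp}\leq\bga$, pick one with $\con{\fp}$ minimal in the order $(\Conj L;\leq)$ (the set is finite and nonempty by the above). If $\con{\fp}$ did not cover $\con{\fq}$, there would be a join-irreducible $\bgc$ with $\con{\fq}<\bgc<\con{\fp}$; applying the argument of the previous paragraph to the pair $\bgc<\con{\fp}$ (which is still a strict inequality of join-irreducibles with $\con{\fp}\leq\bga$) produces prime intervals $\fp',\fq'$ with $\fp'$ swinging to $\fq'$ and $\con{\fq'}<\con{\fp'}\leq\con{\fp}$; but if $\con{\fp'}=\con{\fp}$ then $\con{\fq'}\geq\bgc>\con{\fq}$ while still $\bgb\leq\con{\fq'}$, and either way we obtain a pair contradicting the minimality of $\con{\fp}$ — unless $\con{\fp'}<\con{\fp}$, in which case the pair $(\fp',\fq')$ already violates minimality directly. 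The delicate point, and the step I expect to be the main obstacle, is this last minimality bookkeeping: one must be careful that the "first strict drop at a swing" extraction, when reapplied, genuinely lands strictly below or, if at the same level, strictly improves the lower endpoint, so that a well-founded descent argument closes. I would handle it by choosing the minimizing pair with respect to the pair $\big(\con{\fp},\ \text{(something measuring distance from }\bgb)\big)$ in a suitable lexicographic sense, so that any re-extraction strictly decreases the measure.
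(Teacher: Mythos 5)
Your extraction of the swing for (i)--(ii) is sound and uses the same mechanism as the paper (along any sequence produced by the General Swing Lemma the congruences $\con{\fr_j}$ weakly decrease, perspectivities and switches preserve them, so every strict drop happens at a swing), but your treatment of (iii) has a genuine gap. Choosing a pair with $\con{\fp}$ minimal and then re-extracting from a join-irreducible $\bgd$ with $\con{\fq}<\bgd<\con{\fp}$ does not produce the contradiction you claim: the first-strict-drop extraction applied to the pair $\bgd<\con{\fp}$ always returns $\con{\fp'}=\con{\fp}$ (before the first drop every value equals the top value), so the new pair never improves your chosen measure ``$\con{\fp}$ minimal,'' and the case you defer to ``a suitable lexicographic sense'' is exactly the case that always occurs. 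What would actually close the descent is that the \emph{lower} value strictly increases, $\con{\fq'}\geq\bgd>\con{\fq}$, while remaining below the fixed $\con{\fp}$, so an induction on (say) the height of $\con{\fq}$ in the finite order $(\Conj L;\leq)$ terminates; you state neither the measure nor verify the strict decrease, so as written the proof of (iii) is incomplete.

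The paper avoids this bookkeeping entirely by reducing to a covering pair \emph{before} invoking the General Swing Lemma: since $\Conj L$ is finite, one can pick $\bgb\leq\bgb'\prec\bga'\leq\bga$ with $\bga'$ covering $\bgb'$ in $(\Conj L;\leq)$, and prime intervals $\fp'$, $\fq'$ with $\con{\fp'}=\bga'$ and $\con{\fq'}=\bgb'$. Then every congruence $\con{\fr_j}$ along the resulting sequence lies between $\bgb'$ and $\bga'$ and is join-irreducible, hence equals $\bga'$ or $\bgb'$; the unique step where the value drops must be a swing, and that swing satisfies (i), (ii), and (iii) simultaneously. Adopting this order of quantifiers --- fix the covering pair first, then run your monotonicity argument once --- repairs your proof with no descent at all.
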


\begin{figure}[htb]
\centerline{\includegraphics{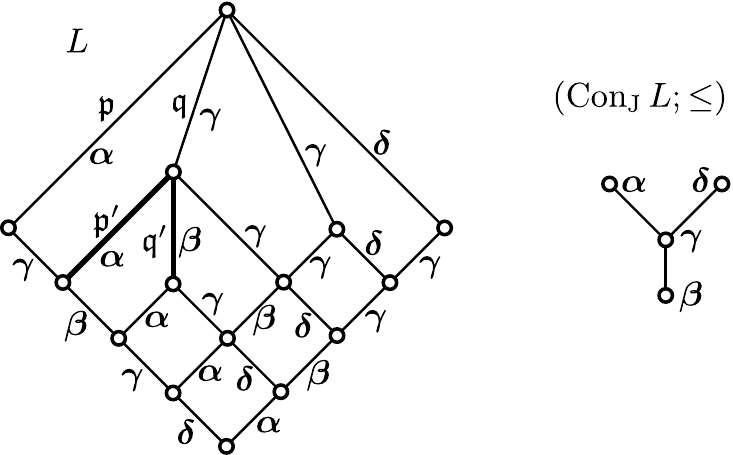}}
\caption{A swing need not give covering}\label{czggghlexmpl}
\end{figure}

Figure~\ref{czggghlexmpl} shows that this lemma does not follow as immediately from the General Swing Lemma. This figure defines a planar semimodular lattice $L$, which is actually an SPS lattice. Although the Swing Lemma in itself is appropriate to determine the order $(\Conj L;\leq)$ as given in the figure, the methods of G.~Cz\'edli~
\cite{gC14} and, mainly, \cite{czganote} are more effective. 
The prime intervals of $L$ are labeled by the principal congruences they generate. 
If we represent $\bga$ and $\bgb$ by the thick edges as $\bga=\con{\fp'}$ and $\bgb=\con{\fq'}$, then the General Swing Lemma applied for $\fp'$ and $\fq'$ yields that
$n=0$, $\fp_0=\fp'$, $\fq_0=\fq'$, and the SPS projectivity described in \eqref{Eq:SPS} consist of a single swing of $\fp'$ to $\fq'$. Hence, we cannot obtain a required covering in $\Conj L$ in this way. Appropriate $\fp$ and $\fq$ for Lemma~\ref{L:disc} are given in the figure, and the proof runs as follows.

\begin{proof}[Proof of Lemma~\ref{L:disc}]
With  $\bga,\bgb\in\Conj L$ as in the lemma, pick $\bga'$ and $\bgb'$ in $\Conj L$ such that $\bga'$ covers $\bgb'$ in $(\Con L;\leq)$,  $\bgb\leq \bgb'$, and $\bga'\leq \bga$. Since a join-irreducible congruence of a finite lattice is always generated by a prime interval, we can pick prime intervals $\fp'$ and $\fq'$ in $L$ such that 
$\bga'=\con{\fp'}$ and $\bgb'=\con{\fq'}$. Since $\con{\fq'}=\bgb'< \bga'=\con{\fp'}$, the prime interval $\fq'$ is collapsed by $\con{\fp'}$. Hence, the General Swing Lemma yields a sequence \eqref{Eq:SPS} from $\fp'$ to $\fq'$ such that, for each $j\in\set{0,\dots,n-1}$, $\fr_j$ is perspective to or swings to or switches to $\fr_{j+1}$.
Of course, this sequence  satisfies that
\begin{equation*}
   \con{\fp'} = \con{\fr_0} \geq \con{\fr_1}  
      \geq \cdots \geq \con{\fr_{n-1}} \geq \con{\fr_n} = \con{\fq'}.
\end{equation*}
If for an integer $j$ with $0 \leq j < n$,
the prime interval $\fr_j$ is perspective to or switches to the prime interval $\fr_{j+1}$, then $\con{\fr_j} = \con{\fr_{j+1}}$. Therefore, since $\con{\fp'}$ covers $\con{\fq'}$ in $(\Conj L;\leq)$ and all the $\con{\fr_j}$ belong to $\Conj L$, there is a unique $i\in\set{1,\dots,n-1}$ such that $\con{\fp'}=\con{\fr_i}$, $\fr_i$ swings to $\fr_{i+1}$, and $\con{\fr_{i+1}}=\con{\fq'}$. Therefore,  we can let $\fp:=\fr_i$ and $\fq:=\fr_{i+1}$.
\end{proof}

We conclude this paper with one more variant of the General Swing Lemma.
This variant makes no direct reference to planarity.

Let $L$ be a finite lattice and let $m$ be a tab of $L$.
Then $K = L - \set{m}$ is a sublattice of $L$. 
We obtain $K$ by \emph{stripping} the tab $m$ from $L$. 
A finite lattice $L$ is \emph{stripped}, if it has no tabs.
It is clear that by consecutive stripping of tabs, 
we obtain from $L$ a stripped (sub) lattice $L^\textup{s}$. 
If we obtain the stripped sublattices $K_1$ and $K_2$ from
the finite lattice $L$, then $K_1$ and $K_2$ are isomorphic; 
this follows from the proof of Lemma 4.1 
in G.~Cz\'edli and E.\,T.~Schmidt~\cite{CS13}.
For instance, starting from $\SM 3$, 
we obtain three different stripped sublattices, 
all of them isomorphic to $\SC{2}^2$.

\begin{lemma}[Reduction Lemma]\label{reductlemma}
Let $L$ be a finite semimodular lattice and 
let $K$ be a stripped sublattice of $L$.
If $K$ satisfies the Swing Lemma, 
then $L$ satisfies the General Swing Lemma.
\end{lemma}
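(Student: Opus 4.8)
The statement to prove is the Reduction Lemma: if $K$ is a stripped sublattice of a finite semimodular lattice $L$ and $K$ satisfies the Swing Lemma, then $L$ satisfies the General Swing Lemma. The natural strategy is induction on the number of tabs of $L$, stripping one tab at a time, exactly as in the proof of Lemma~\ref{L:GSL} given in Section~\ref{S:Swing}. The base case is when $L$ is already stripped: then $K = L$ (up to isomorphism, by the remark preceding the lemma, and in fact $L$ itself is a legitimate choice of stripped sublattice), and the Swing Lemma for $K$ is literally the General Swing Lemma for $L$, since a stripped lattice has no covering multi-diamond and hence admits no switches.

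For the induction step, I would strip a single tab $m$ from $L$ to form $L' = L - \set{m}$, which is semimodular by Lemma~\ref{veL} and is again a finite semimodular lattice with fewer tabs. A stripped sublattice of $L'$ is also a stripped sublattice of $L$, so by the hypothesis on $L$ (namely that \emph{some} stripped sublattice satisfies the Swing Lemma, together with the isomorphism of all stripped sublattices) the lattice $L'$ inherits a stripped sublattice satisfying the Swing Lemma; hence by the induction hypothesis $L'$ satisfies the General Swing Lemma. Now I am in precisely the situation handled in the induction step of the proof of Lemma~\ref{L:GSL}: given distinct prime intervals $\fp, \fq$ of $L$ with $\fq$ collapsed by $\con{\fp}$, I reduce via switches to the case $\fp, \fq \ci L'$, and then either $\fq$ is already collapsed by $\conKp{\fp}$ (where I write $K'=L'$) and the General Swing Lemma for $L'$ finishes the job, or the Tab Lemma (Lemma~\ref{mainL}) together with Lemmas~\ref{smL} and \ref{genSL} produce the extra swing and switch needed to stitch together the PS sequence in $L$. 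In other words, the bulk of the argument is a verbatim repetition of the proof already written in Section~\ref{S:Swing}, with the single change that the base case invokes the Swing Lemma for $K$ rather than for $L$ directly.

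**The main obstacle.** The only genuinely new point — and the step I expect to require care — is the bookkeeping about stripped sublattices: one must check that the property ``$L$ has a stripped sublattice satisfying the Swing Lemma'' is preserved when passing from $L$ to $L' = L - \set{m}$, i.e., that a stripped sublattice of $L'$ is a stripped sublattice of $L$ and that, by the cited isomorphism result from G.~Cz\'edli and E.\,T.~Schmidt~\cite{CS13}, it is isomorphic to the given stripped sublattice of $L$ and hence also satisfies the Swing Lemma. Once this is in place, the inductive machinery of Lemma~\ref{L:GSL} applies unchanged. Essentially, the Reduction Lemma is just the observation that the proof of the General Swing Lemma never uses planarity of $L$ as such — it only uses semimodularity, the existence of tabs with the multi-diamond structure, and the Swing Lemma at the bottom of the induction; so replacing ``$L$ is planar semimodular'' by ``$L$ is finite semimodular with a stripped sublattice satisfying the Swing Lemma'' costs nothing.
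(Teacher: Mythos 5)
Your proposal coincides with the paper's own (implicit) proof: the authors state that the Reduction Lemma is proved by the induction of Section~\ref{S:Swing}, namely induction on the number of tabs with the base case supplied by the Swing Lemma for the stripped sublattice and with the stripped-sublattice bookkeeping via the isomorphism result of Cz\'edli and Schmidt, exactly as you describe. One minor slip: in a non-planar semimodular lattice a stripped lattice may still contain covering multi-diamonds (for instance $\SM 3 \times \SC{2}$ has no doubly-irreducible atoms, hence no tabs), so your base-case justification ``no covering multi-diamond, hence no switches'' is not valid in general; it is also unnecessary, since the Swing Lemma for a lattice trivially implies the General Swing Lemma for it (take $n=0$ for one direction, and the other direction holds in every lattice by Lemmas~\ref{collapse2L} and~\ref{m3L}).
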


A proof of the Reduction Lemma is implicit in Section~\ref{S:Swing}.
Observing that in the Reduction Lemma, 
if $L$ is planar and semimodular, 
then $K$ is slim, planar, and semimodular,
we conclude that the Reduction Lemma implies 
the General Swing Lemma.

Finally, we know that gluing preserves semimodularity; see, for example, E.~Fried, G.~Gr\"atzer, and E.\,T.~Schmidt~\cite[Theorem 27]{FrGrSch93} for a stronger statement. Hence, if we glue a non-planar distributive lattice and a planar but non-slim semimodular lattice, then we obtain a nontrivial example in the  scope of the Reduction Lemma but not of the General Swing Lemma.

\end{document}